\newtheorem{theorem}{Theorem}[section]
\newtheorem{corollary}[theorem]{Corollary}
\newtheorem{assumption}[theorem]{Assumption}
\newtheorem{remark}[theorem]{Remark}
\newtheorem{proposition}[theorem]{Proposition}
\newtheorem{lemma}[theorem]{Lemma}
\newcommand{\N}{\mathbb{N}}
\newcommand{\Z}{\mathbb{Z}}
\newcommand{\R}{\mathbb{R}}
\newcommand{\bC}{\mathbb{C}}
\newcommand{\n}{\mathfrak{n}}
\newcommand{\g}{\mathfrak{g}}
\newcommand{\solvmfd}{\left. \Gamma \middle\backslash G \right.}
\newcommand{\del}{\partial}
\newcommand{\delbar}{\overline{\del}}
\DeclareMathOperator{\imm}{im}
\DeclareMathOperator{\de}{d}
\DeclareMathOperator{\esp}{e}
\DeclareMathOperator{\GL}{GL}
\DeclareMathOperator{\Aut}{Aut}
\DeclareMathOperator{\Hom}{Hom}
\DeclareMathOperator{\Ad}{Ad}
\DeclareMathOperator{\diag}{diag}
\title[Cohomologies of deformations of solvmanifolds]{Cohomologies of deformations of solvmanifolds and closedness of some properties}
\author{Angella, Daniele}
\address[Daniele Angella]{Dipartimento di Matematica e Informatica "Ulisse Dini", Università degli Studi di Firenze, viale Morgagni 67/a, 50134 Firenze, Italy}
\email{daniele.angella@gmail.com}
\email{daniele.angella@unifi.it}
\author{Kasuya, Hisashi}
\address[Hisashi Kasuya]{Department of Mathematics, Graduate School of Science, Osaka University, Toyonaka, Osaka 560-0043, Japan}
\email{kasuya@math.sci.osaka-u.ac.jp}
\keywords{Dolbeault cohomology, Bott-Chern cohomology, solvmanifold, deformation, $\partial\overline\partial$-Lemma}
\subjclass[2010]{53C30, 57T15, 32G05}
\thanks{The first author has been granted with a research fellowship by Istituto Nazionale di Alta Matematica INdAM, and is supported by the Project PRIN ``Variet\`{a} reali e complesse: geometria, topologia e analisi armonica'', by the Project FIRB ``Geometria Differenziale e Teoria Geometrica delle Funzioni'',
and by GNSAGA of INdAM.
The second author is supported by JSPS Research Fellowships for Young Scientists.
\medskip
The first author is greatly indebted to Adriano Tomassini for his constant support and encouragement. He would like to thank also Gunnar \TH{}\'{o}r Magn\'{u}sson for useful discussions.
The authors thank Luis Ugarte for valuable discussions which motivated them to study closedness under holomorphic deformations more deeply.
Thanks also to the anonymous Referee for her/his suggestions on presentation.\\
The main part of the work was accomplished during the second author's stay at Dipartimento di Matematica of Universit\`{a} di Pisa, in the winter of 2013.
}
\begin{document}

\begin{abstract}
We provide further techniques to study the Dolbeault and Bott-Chern cohomologies of deformations of solvmanifolds by means of finite-dimensional complexes. 
By these techniques, we can compute the Dolbeault and Bott-Chern cohomologies of some complex solvmanifolds, and we also get explicit examples, showing in particular that either the $\partial\overline{\partial}$-Lemma or the property that the Hodge and Fr\"olicher spectral sequence degenerates at the first level are not closed under deformations.
\end{abstract}

\maketitle

\section*{Introduction}
Among other techniques, the theory of \emph{small deformations of holomorphic structures}, initiated and developed by K. Kodaira and D.~C. Spencer, L. Nirenberg, and M. Kuranishi, provides a large source of examples of compact complex manifolds.

As a natural problem, the behaviour of special metrics or cohomological properties under deformations deserves special interests in order to better understand the geometry of complex manifolds. In such a context, the stability results for K\"ahler structures plays a guiding role: in fact, K. Kodaira and D.~C. Spencer proved in \cite[Theorem 15]{kodaira-spencer-3} that any small deformations of a compact K\"ahler manifold still admits a K\"ahler metric. On the other hand, the result holds no more true when replacing the K\"ahler condition with weaker metric conditions, such as, for example, the existence of balanced metrics in the sense of M. L. Michelsohn, \cite[Proposition 4.1]{alessandrini-bassanelli}, or the existence of pluri-closed metrics, \cite[Theorem 2.2]{fino-tomassini-advmath}, (nor also in the non-elliptic context of $\mathbf{D}$-complex geometry in the sense of F.~R. Harvey and H.~B. Lawson, \cite[Theorem 4.2]{angella-rossi}, nor in the non-integrable case of almost-K\"ahler geometry). As 
regards cohomological properties, the stability of the $\partial\overline{\partial}$-Lemma under deformations has been proved in several ways, see \cite[Proposition 9.21]{voisin}, \cite[Theorem 5.12]{wu}, \cite[\S B]{tomasiello}, \cite[Corollary 2.7]{angella-tomassini-3}. (We recall that a compact complex manifold is said to satisfy the \emph{$\partial\overline{\partial}$-Lemma} if every $\partial$-closed, $\overline{\partial}$-closed, $\de$-exact form is also $\partial\overline{\partial}$-exact, see, e.g., \cite{deligne-griffiths-morgan-sullivan}.)
K. Kodaira and D.~C. Spencer's result, for example, can be phrased by saying that, for any family of compact complex manifolds parametrized over the manifold $\mathcal{B}$, the set of parameters of $\mathcal{B}$ for which the corresponding complex manifold admits a K\"ahler metric is open in the topology of $\mathcal{B}$. In \cite[Theorem 2.20]{angella-kasuya-1}, in studying the cohomologies of the completely-solvable Nakamura manifold, the authors provided an example of a curve $\left\{J_t\right\}_{t\in B}$ of complex structures and of a sequence $\left\{t_k\right\}_{k\in\N}\subset B$ converging to $t_\infty$ in the topology of $B$ such that $\left(X,\, J_{t_k}\right)$ satisfies the $\partial\overline{\partial}$-Lemma for any $k\in\N$ but $\left(X,\, J_{t_\infty}\right)$ does not; in other words, the set of parameters for which the $\partial\overline{\partial}$-Lemma holds is not closed in the (strong) topology of the base space. Actually, as L. Ugarte pointed out to us, in studying the behaviour under 
limits of compact complex manifolds, it is common to consider Zariski 
topology instead of strong topology: in fact, e.g., Mo\v{\i}{\v{s}}ezon property, \cite{moishezon}, is supposed to be closed with respect to the Zariski topology, see \cite{popovici-inventiones} for motivations and results, while it is not closed in the strong topology. With such a notion of (Zariski) closedness, we provide here an example to prove the following result. Note that the non-closedness of $E_{1}$-degeneration of the Hodge and Fr\"olicher spectral sequences was already proven by M.~G. Eastwood and M.~A. Singer in \cite[Theorem 5.4]{eastwood-singer} by using twistor spaces.

\smallskip
\noindent{\bfseries Theorem (see Corollary \ref{cor:non-closedness}).}
{\itshape
The property of $E_{1}$-degeneration of the Hodge and Fr\"olicher spectral sequences and the property of satisfying the $\partial\bar\partial$-Lemma are not closed under holomorphic deformations.
}
\smallskip

\medskip

In order to provide such an example, we continue in investigating the class of nilmanifolds and solvmanifolds from the point of view of cohomologies computations. More precisely, we would enlarge the class of solvmanifolds for which the de Rham, Dolbeault, and Bott-Chern cohomologies can be computed by means of just a finite-dimensional sub-complex of the double-complex of differential forms, by carrying over the results in \cite{nomizu, hattori, mostow, mostow-errata, guan, console-fino-sns, kasuya-jdg, kasuya-holpar, console-fino-kasuya, sakane, cordero-fernandez-gray-ugarte, console-fino, rollenske, 
rollenske-survey, kasuya-mathz, angella-1, angella-kasuya-1}. (We recall that, given a double-complex $\left(A^{\bullet,\bullet},\, \del,\, \delbar\right)$, the Dolbeault cohomology is $H^{\bullet,\bullet}_{\delbar}(A^{\bullet,\bullet}):=\frac{\ker\delbar}{\imm\delbar}$ and the \emph{Bott-Chern cohomology} is $H^{\bullet,\bullet}_{BC}(A^{\bullet,\bullet}):=\frac{\ker\del\cap\ker\delbar}{\imm\del\delbar}$; one can also consider the \emph{Aeppli cohomology}, $H^{\bullet,\bullet}_{A}(A^{\bullet,\bullet}):=\frac{\ker\del\delbar}{\imm\del+\imm\delbar}$, which is, in a sense, the dual of the Bott-Chern cohomology; finally, in considering a complex manifold, the Dolbeault and Bott-Chern cohomology are defined by means of the double-complex $\left(\wedge^{\bullet,\bullet}X,\, \del,\, \delbar\right)$ of complex-valued differential forms; see \cite{aeppli, bott-chern}, see also \cite{demailly-agbook, schweitzer}.) More precisely, we provide the following stability results for cohomology computations of deformations of 
solvmanifolds, in the vein of the results proven in \cite[Theorem 1]{console-fino} and \cite[Theorem 3.9]{angella-1} for nilmanifolds; (we refer to Theorem \ref{thm:dolb-deformations} and Theorem \ref{thm:bc-deformations} for the precise statement).

\smallskip
\noindent{\bfseries Theorem (see Theorem \ref{thm:dolb-deformations} and Theorem \ref{thm:bc-deformations}).}
{\itshape
Given a solvmanifold $X=\left.\Gamma\middle\backslash G\right.$ endowed with a left-invariant complex structure $J$, for which there exists a finite-dimensional sub-complex $C^{\bullet,\bullet}\subset \wedge^{\bullet,\bullet}X$ computing the Dolbeault cohomology, we provide conditions in order that suitable deformations of $C^{\bullet,\bullet}$, still allow to compute Dolbeault and Bott-Chern cohomologies of some small deformations of $J$.
}
\smallskip

The proof of this theorem is inspired by the proof of K. Kodaira and D.~C. Spencer's theorem on the upper-semi-continuity of the dimensions of the Dolbeault cohomology groups, \cite{kodaira-spencer-3}.
Considering downers of cohomologies, differing from upper-semi-continuity, by this theorem we can observe ``nose-diving'' phenomena, as in the following examples, which are generalizations of the three-dimensional examples found by K. Kodaira and I. Nakamura, \cite{nakamura}.

\smallskip
\noindent{\bfseries Example (see Section \ref{SYE}).}
{\itshape
Let $N$ be a complex nilpotent Lie group.
Suppose that the Lie algebra of $N$ has a $(\Z+\sqrt{-1}\,\Z)$-basis.
Then, for certain semidirect product $G=\bC\ltimes _{\phi}(N\times N)$, we have a lattice $\Gamma$ of $G$ by the results of H. Sawai and T. Yamada, {\rm \cite{sawai-yamada}}, and there exists a deformation $\{J_{t}\}_t$ of the holomorphically parallelizable solvmanifold $\solvmfd$ such that $\dim H^{1,0}_{\bar\partial_{t}}(\solvmfd)=0$, where $H^{\bullet,\bullet}_{\bar\partial_{t}}(\solvmfd)$ is the Dolbeault cohomology of a deformed complex solvmanifold.
}
\smallskip

In particular, it follows that the above examples provide a new class of ``Dolbeault-cohomologically-computable'' complex solvmanifolds, since they are not holomorphic fibre bundles over complex tori as in \cite{kasuya-mathz, kasuya-holpar, console-fino-kasuya}.

\section{Deformations and cohomology}

Let $(M,J)$ be a compact complex manifold and $\left(\wedge^{\bullet,\bullet}_{J}M,\, \partial,\, \bar\partial\right)$ be the double-complex of complex-valued differential forms on $M$ associated with the complex structure $J$.
We consider deformations $\{J_{t}\}_{t\in B}$ over a ball $B\subset \bC^{m}$ such that $J_0=J$.
We also consider the  double-complex $\left(\wedge^{\bullet,\bullet}_{J_{t}}M,\, \partial_{t},\, \bar\partial_{t}\right)$ associated with the deformed complex structure $J_{t}$.

We are interested in manifolds whose cohomologies can be computed by means of just a finite-dimensional sub-double-complex of $\left(\wedge^{\bullet,\bullet}_{J}M,\, \partial,\, \bar\partial\right)$. In particular, in this section, we are concerned in studying the behaviour of such a property under small deformations of the complex structure.

\medskip

Inspired by \cite{kodaira-spencer-3}, we prove the following result.
\begin{theorem}\label{thm:dolb-deformations}
Let $(M,J)$ be a compact complex manifold, and consider deformations $\left\{J_t\right\}_{t\in B}$ such that $J_0=J$.
We suppose that we have a family $\left\{ C^{\bullet,\bullet}_{t} = \bC\langle \phi^{\bullet,\bullet}_{i}(t)\rangle_{i} \right\}_{t \in B}$ of sub-vector spaces of $(\wedge^{\bullet,\bullet}_{J_t}M,\partial_t,\bar\partial_t)$ parametrized by $t\in B$ and spanned by linearly-independent vectors $\phi^{\bullet,\bullet}_{i}(t)$ so that:
\begin{enumerate}[(1)]
 \item for each $t\in B$, it holds that $(C^{\bullet,\bullet}_{t},\bar\partial_{t})$ is a sub-complex of $(\wedge^{\bullet,\bullet}_{J_{t}}M,\bar\partial_{t})$;
 \item $\phi^{\bullet,\bullet}_{i}(t)$ is smooth on $M\times  B$, for any $i$;
 \item the inclusion $C^{\bullet,\bullet}_{0}\subset \wedge^{\bullet,\bullet}_{J}M$ induces the cohomology isomorphism
\[ H^{\bullet,\bullet}_{\bar\partial_0}(C_{0}^{\bullet,\bullet})\cong H^{\bullet,\bullet}_{\bar\partial}(M) \;; \]
 \item there exists a smooth family $\{g_{t}\}_{t\in B}$ of $J_t$-Hermitian metrics such that $\bar*_{g_{t}}(C^{\bullet,\bullet}_t)\subseteq C^{n-\bullet,n-\bullet}_t$, where we denote by $\bar*_{g_{t}}$ the anti-$\bC$-linear Hodge-$*$-operator of $g_{t}$, and by $2n$ the real dimension of $M$.
\end{enumerate}
Then, for sufficiently small $t$, the inclusion $C^{\bullet,\bullet}_{t}\subset \wedge^{\bullet,\bullet}_{J_{t}}(M)$ 
 induces the cohomology isomorphism
\[ H^{\bullet,\bullet}_{\bar\partial_{t}}(C_t^{\bullet,\bullet})\cong H^{\bullet,\bullet}_{\bar\partial_{t}}(M) \;. \]
\end{theorem}

\begin{proof}
Consider the operators $\bar\partial^{*}_{t}=-\bar*_{g_{t}}\bar\partial_{t}\bar*_{g_{t}}$
and $\Delta_{\bar\partial_{t}}=\bar\partial_{t}\bar\partial^{*}_{t}+\bar\partial^{*}_{t}\bar\partial_{t}$.
Then by the conditions {\itshape (1)} and {\itshape (4)}, the operator $\Delta_{\bar\partial_{t}}$ can be defined on $C^{\bullet,\bullet}_{t}$.
By a result by K. Kodaira and D.~C. Spencer, \cite[Theorem 11]{kodaira-spencer-3}, see also \cite[Theorem 7.1]{kodaira}, for each $t\in B$, we have a basis $\{e_{1}(t), \ldots, e_{i}(t), \ldots\}$ of $\wedge^{\bullet,\bullet}_{J_{t}}M$ and continuous functions $a_{1}(t)\le \dots \le a_{i}(t) \le \cdots$ on $B$ such that 
$\Delta_{\bar\partial_{t}} e_{i}(t) =a_{i}(t)e_{i}(t)$ for any $i$.
Since $\Delta_{\bar\partial_{t}}$ is defined on $C^{\bullet,\bullet}_{t}$,
we can take a subset $\left\{ e_{i_{1}}(t), \dots, e_{i_{\ell}}(t) \right\}$ of $\{e_{i}(t)\}_i$ that is a basis of $C^{\bullet,\bullet}_{t}$.
Take $\{e_{j}(t),\dots, e_{j+k}(t)\}=\{e_{i}(t) \;\vert\; a_{i}(0)=0\}$.
Then $\{e_{j}(0),\dots, e_{j+k}(0)\}$ is a basis of $\ker \Delta_{\bar\partial_{0}}$.
By the assumption {\itshape (3)}, we have $\ker \Delta_{\bar\partial_{0}}\subseteq C^{\bullet,\bullet}_0$.
Hence we have $\{e_{j}(t),\dots, e_{j+k}(t)\}\subseteq C^{\bullet,\bullet}_{t}$ for any $t \in B$.
Since each $a_{i}$ is continuous, we have, for sufficiently small $t\in B$, that $a_{j-1}(t)<0$ and $0<a_{j+k+1}(t)$.
Hence we have  $\ker \Delta_{\bar\partial_{t}}\subseteq \{e_{j}(t),\dots, e_{j+k}(t)\}\subseteq C^{\bullet,\bullet}_{t}$.
Hence the theorem follows.
\end{proof}

Analogously, as regards the Bott-Chern cohomology, by considering the operators $\bar\partial^{*}_{t}=-\bar*_{g_{t}}\bar\partial_{t}\bar*_{g_{t}}$ and $\partial^{*}_{t}=-\bar*_{g_{t}}\partial_{t}\bar*_{g_{t}}$, and $\tilde\Delta_{{BC}_{t}} =
\partial_t\bar\partial_t\bar\partial_t^*\partial_t^*+\bar\partial_t^*\partial_t^*\partial_t\bar\partial_t+\bar\partial_t^*\partial_t\partial_t^*\bar\partial_t+\partial_t^*\bar\partial_t\bar\partial_t^*\partial_t+\bar\partial_t^*\partial_t+\partial_t^*\partial_t$, see \cite[Proposition 5]{kodaira-spencer-3} and \cite[\S2.b]{schweitzer}, a similar argument yields the following result.

\begin{theorem}\label{thm:bc-deformations}
Let $(M,J)$ be a compact complex manifold, and consider deformations $\left\{J_t\right\}_{t\in B}$ such that $J_0=J$.
We suppose that we have a family $\left\{ C^{\bullet,\bullet}_{t} = \bC\langle \phi^{\bullet,\bullet}_{i}(t)\rangle_{i} \right\}_{t\in B}$ of sub-vector spaces of $(\wedge^{\bullet,\bullet}_{J_t}M,\partial_t,\bar\partial_t)$ parametrized by $t\in B$ and spanned by linearly-independent vectors $\phi^{\bullet,\bullet}_{i}(t)$ so that:
\begin{enumerate}[(1)]
 \item for each $t\in B$, it holds that $(C^{\bullet,\bullet}_{t}, \partial_{t}  ,\bar\partial_{t})$ is a sub-double-complex of  $(\wedge^{\bullet,\bullet}_{J_{t}}M,\partial_{t},\bar\partial_{t})$;
 \item $\phi^{\bullet,\bullet}_{i}(t)$ is smooth on $M\times B$, for any $i$;
 \item the inclusion $C^{\bullet,\bullet}_{0}\subset \wedge^{\bullet,\bullet}_{J}M$ induces the Bott-Chern cohomology isomorphism
\[ H^{\bullet,\bullet}_{BC}(C_{0}^{\bullet,\bullet}) \cong H^{\bullet,\bullet}_{BC}(M) \;; \]
 \item there exists a smooth family $\{g_{t}\}_{t\in B}$ of $J_t$-Hermitian metrics such that $\bar*_{g_{t}}(C^{\bullet,\bullet}_t)\subseteq C^{n-\bullet,n-\bullet}_t$, where we denote by $\bar*_{g_{t}}$ the anti-$\bC$-linear Hodge-$*$-operator of $g_{t}$, and by $2n$ the real dimension of $M$.
\end{enumerate}
Then, for sufficiently small $t$, the inclusion $C^{\bullet,\bullet}_{t}\subset \wedge^{\bullet,\bullet}_{J_{t}}M$ induces the Bott-Chern cohomology isomorphism
\[ H^{\bullet,\bullet}_{BC}(C_{t}^{\bullet,\bullet})\cong H^{\bullet,\bullet}_{BC}(M) \;. \]
\end{theorem}

\section{Applications: nilmanifolds}

Consider nilmanifolds, that is, compact quotients of connected simply-connected nilpotent Lie groups by discrete co-compact subgroups, and take left-invariant complex structures.
By considering the sub-double-complex $C^{\bullet,\bullet} = \wedge^{\bullet,\bullet}(\mathfrak{g}\otimes_\R\bC)^*$ of left-invariant differential forms, where $\mathfrak{g}$ is the Lie algebra associated to the nilmanifold, one recovers the stability results in \cite{console-fino, angella-1} by Theorem \ref{thm:dolb-deformations} and Theorem \ref{thm:bc-deformations}.

\begin{corollary}[{\cite[Theorem 1]{console-fino}, \cite[Theorem 3.9]{angella-1}}]
Let $X = \left. \Gamma \middle\backslash G \right.$ be a nilmanifold, and denote the Lie algebra associated to $G$ by $\mathfrak{g}$ and its complexification by $\mathfrak{g}_\bC := \mathfrak{g}\otimes_\R\bC$. The set of $G$-left-invariant complex structures on $X$ such that the inclusion $\wedge^{\bullet,\bullet}\mathfrak{g}_\bC^* \subset \wedge^{\bullet,\bullet}X$ induces the isomorphism $H^{\bullet,\bullet}_{\bar\partial}(\wedge^{\bullet,\bullet}\mathfrak{g}^*) \cong H^{\bullet,\bullet}_{\bar\partial}(X)$,
respectively $H^{\bullet,\bullet}_{BC}(\wedge^{\bullet,\bullet}\mathfrak{g}^*) \cong H^{\bullet,\bullet}_{BC}(X)$,
is open in the set of $G$-left-invariant complex structures on $X$.
\end{corollary}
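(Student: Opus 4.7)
The plan is to deduce the corollary directly from Theorem \ref{thm:dolb-deformations} (respectively Theorem \ref{thm:bc-deformations}) applied to the canonical finite-dimensional subcomplex of $G$-left-invariant forms, namely $C^{\bullet,\bullet}_t := \wedge^{\bullet,\bullet}_{J_t}\mathfrak{g}_\C^*$, regarded for each left-invariant $J_t$ as a sub-double-complex of $(\wedge^{\bullet,\bullet}_{J_t}X,\partial_t,\bar\partial_t)$. Fix a complex structure $J_0$ lying in the set of the corollary, choose any smooth family $\{J_t\}_{t\in B}$ of $G$-left-invariant complex structures over a small ball $B\subset\C^{m}$ with $J_0$ at the origin, and verify the four hypotheses of the relevant theorem for the family $\{C^{\bullet,\bullet}_t\}_{t\in B}$.

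Conditions (1) and (3) are essentially immediate. For (1), the Chevalley--Eilenberg differential preserves the space of $G$-left-invariant forms on $X$, and since $J_t$ is left-invariant the associated decomposition into bi-types is by left-invariant subspaces, so $\partial_t$ and $\bar\partial_t$ preserve $C^{\bullet,\bullet}_t$. Condition (3) is precisely the assumption that $J_0$ belongs to the set whose openness we want to prove.

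For condition (2), the idea is to transport a fixed basis via the bi-type projections associated to $J_t$. Fix, for each bidegree $(p,q)$, a basis $\{\phi^{p,q}_i\}_i$ of $\wedge^{p,q}_{J_0}\mathfrak{g}_\C^*$, and let $\pi^{p,q}_t\colon\wedge^{p+q}\mathfrak{g}_\C^*\to\wedge^{p,q}_{J_t}\mathfrak{g}_\C^*$ denote the bi-type projection induced by $J_t$. The composition $\pi^{p,q}_t\circ\iota\colon\wedge^{p,q}_{J_0}\mathfrak{g}_\C^*\to\wedge^{p,q}_{J_t}\mathfrak{g}_\C^*$ is the identity at $t=0$ and depends smoothly on $t$, hence is an isomorphism for $t$ small; setting $\phi^{p,q}_i(t):=\pi^{p,q}_t(\phi^{p,q}_i)$ yields a smooth basis of $C^{\bullet,\bullet}_t$, and these forms are left-invariant (hence smooth on $M\times B$). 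For condition (4), start from any left-invariant Riemannian metric $g$ on $\mathfrak{g}$ and set $g_t:=\tfrac{1}{2}\bigl(g(\cdot,\cdot)+g(J_t\cdot,J_t\cdot)\bigr)$; this gives a smooth family of $J_t$-Hermitian, left-invariant metrics, and because both $g_t$ and the space $C^{\bullet,\bullet}_t$ are left-invariant, the anti-$\C$-linear Hodge star $\bar*_{g_t}$ preserves left-invariance and so sends $C^{p,q}_t$ into $C^{n-q,n-p}_t\subseteq C^{n-\bullet,n-\bullet}_t$.

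With all four hypotheses verified, Theorem \ref{thm:dolb-deformations} (respectively Theorem \ref{thm:bc-deformations}) gives the required isomorphism $H^{\bullet,\bullet}_{\bar\partial_t}(C^{\bullet,\bullet}_t)\cong H^{\bullet,\bullet}_{\bar\partial_t}(X)$ (respectively for Bott--Chern) for $t$ in some neighbourhood of $0$, proving openness. The only genuine technical point is condition (2): one must ensure that a basis of $C^{\bullet,\bullet}_t$ can be chosen that is simultaneously smooth in $t$ and left-invariant on $X$; the projection-transport trick above handles this, and reduces the whole argument to finite-dimensional linear algebra on $\wedge^\bullet\mathfrak{g}_\C^*$. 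I do not foresee any deeper obstacle, since the nilmanifold setting was precisely the model case motivating Theorem \ref{thm:dolb-deformations} and Theorem \ref{thm:bc-deformations}.
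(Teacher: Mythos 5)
Your proposal is correct and follows exactly the route the paper takes (which it states in a single sentence): apply Theorem \ref{thm:dolb-deformations}, respectively Theorem \ref{thm:bc-deformations}, to the sub-double-complex $C^{\bullet,\bullet}_t=\wedge^{\bullet,\bullet}_{J_t}\mathfrak{g}_\C^*$ of left-invariant forms, the verification of hypotheses \emph{(1)}--\emph{(4)} being routine since left-invariance is preserved by the bi-type decomposition, by the Chevalley--Eilenberg differential, and by the Hodge star of a left-invariant Hermitian metric. (The only cosmetic slip is the bidegree of the anti-$\C$-linear star, which lands in $(n-p,n-q)$ rather than $(n-q,n-p)$, but this is immaterial here because $C^{\bullet,\bullet}_t$ is the full bigraded algebra $\wedge^{\bullet,\bullet}_{J_t}\mathfrak{g}_\C^*$.)
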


We recall that, in view of \cite[Theorem 1]{sakane}, \cite[Main Theorem]{cordero-fernandez-gray-ugarte}, \cite[Theorem 2, Remark 4]{console-fino}, \cite[Theorem 1.10]{rollenske}, and \cite[Corollary 3.10]{rollenske-survey}, \cite[Theorem 3.8]{angella-1}, the above set contains several classes of left-invariant complex structures, among which holomorphically parallelizable, Abelian, nilpotent, and rational.

\section{Applications: solvmanifolds}

In order to investigate explicit examples, we recall some results concerning the computations of Dolbeault cohomology for solvmanifolds of two special classes, namely, \emph{solvmanifolds of splitting-type}, (that is, satisfying Assumption \ref{ass:solvmanifolds},) \cite{kasuya-mathz}, and \emph{holomorphically parallelizable solvmanifolds}, (that is, with holomorphically-trivial holomorphic tangent bundle,) \cite{wang}.

\medskip

We start by considering solvmanifolds of the following type, see \cite{kasuya-mathz}. We call them \emph{solvmanifolds of splitting-type}.

\begin{assumption}\label{ass:solvmanifolds}
Consider a solvmanifold $X = \solvmfd$ endowed with a $G$-left-invariant complex structure $J$. Assume that $G$ is the semi-direct product $\bC^{n}\ltimes_{\phi}N$ so that:
\begin{enumerate}
 \item\label{item:ass-1} $N$ is a connected simply-connected $2m$-dimensional nilpotent Lie group endowed with an $N$-left-invariant complex structure $J_N$;
 \item\label{item:ass-2} for any $t\in \bC^{n}$, it holds that $\phi(t)\in \GL(N)$ is a holomorphic automorphism of $N$ with respect to $J_N$;
 \item\label{item:ass-3} $\phi$ induces a semi-simple action on the Lie algebra $\n$ associated to $N$;
 \item\label{item:ass-4} $G$ has a lattice $\Gamma$; (then $\Gamma$ can be written as $\Gamma = \Gamma_{\bC^n} \ltimes_{\phi} \Gamma_{N}$ such that $\Gamma_{\bC^n}$ and $\Gamma_{N}$ are  lattices of $\bC^{n}$ and, respectively, $N$, and, for any $t\in \Gamma^{\prime}$, it holds $\phi(t)\left(\Gamma_N\right)\subseteq\Gamma_N$;)
 \item\label{item:ass-5} the inclusion $\wedge^{\bullet,\bullet}\left(\n\otimes_\R\bC\right)^* \hookrightarrow \wedge^{\bullet,\bullet}\left(\left. \Gamma_{N} \right\backslash N\right)$ induces the isomorphism
$$ H^{\bullet,\bullet}_{\bar\partial}\left(\wedge^{\bullet,\bullet}\left(\n\otimes_\R\bC\right)^*\right) \stackrel{\cong}{\to} H^{\bullet,\bullet}_{\bar\partial }\left(\left.\Gamma_{N}\middle\backslash N\right.\right) \;. $$
\end{enumerate}
\end{assumption}

Consider the standard basis $\left\{ X_{1},\, \dots,\, X_{n} \right\}$ of $\bC^n$.
Consider the decomposition $\n\otimes_\R{\bC}=\n^{1,0}\oplus \n^{0,1}$ induced by $J_N$.
By the condition {\itshape (\ref{item:ass-2})}, this decomposition is a direct sum of $\bC^{n}$-modules.
By the condition {\itshape (\ref{item:ass-3})}, we have a basis $\left\{Y_{1},\, \dots,\, Y_{m}\right\}$ of $\n^{1,0}$ and characters $\alpha_1,\ldots,\alpha_m\in\Hom(\bC^n;\bC^*)$ such that the induced action $\phi$ on $\n^{1,0}$ is represented by
$$ \bC^n \ni t \mapsto \phi(t) \;=\; \diag \left( \alpha_{1}(t),\, \dots,\, \alpha_{m} (t) \right) \in \GL(\n^{1,0}) \;. $$
For any $j\in\{1,\ldots,m\}$, since $Y_{j}$ is an $N$-left-invariant $(1,0)$-vector field on $N$,
the $(1,0)$-vector field $\alpha_{j}Y_{j}$ on $\bC^{n}\ltimes _{\phi} N$ is $\left(\bC^{n}\ltimes _{\phi} N\right)$-left-invariant.
Consider the Lie algebra $\g$ of $G$ and the decomposition $\g_\bC := \g \otimes_\R \bC = \g^{1,0} \oplus \g^{0,1}$ induced by $J$.
Hence we have a basis $\left\{ X_{1},\, \dots,\, X_{n},\, \alpha_{1}Y_{1},\, \dots,\, \alpha_{m}Y_{m}\right\}$ of $\g^{1,0}$, and let $\left\{ x_{1},\, \dots,\, x_{n},\, \alpha^{-1}_{1}y_{1},\, \dots,\, \alpha_{m}^{-1}y_{m} \right\}$ be its dual basis of $\wedge^{1,0}\g_\bC^*$.
Then we have 
$$ \wedge^{p,q}\g_\bC^* \;=\; \wedge^{p} \left\langle x_{1},\, \dots,\, x_{n},\, \alpha^{-1}_{1}y_{1},\, \dots,\,\alpha^{-1}_{m}y_{m} \right\rangle \otimes \wedge^{q} \left\langle \bar x_{1},\, \dots,\, \bar x_{n},\, \bar\alpha^{-1}_{1}\bar y_{1},\, \dots,\, \bar\alpha^{-1}_{m}\bar y_{m} \right\rangle \;.$$

\medskip

The following lemma holds.

\begin{lemma}[{\cite[Lemma 2.2]{kasuya-mathz}}]\label{charr}
Let $X = \solvmfd$ be a solvmanifold endowed with a $G$-left-invariant complex structure $J$ as in Assumption \ref{ass:solvmanifolds}.
With the above notations,
for any $j\in\{1,\ldots,m\}$, there exist unique unitary characters $\beta_{j}\in\Hom(\bC^n;\bC^*)$ and $\gamma_{j}\in\Hom(\bC^n;\bC^*)$ on $\bC^{n}$ such that $\alpha_{j}\beta_{j}^{-1}$ and $\bar\alpha_{j}\gamma^{-1}_{j}$ are holomorphic.
\end{lemma}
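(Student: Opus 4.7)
The plan is to lift each character through the exponential map of $\C^*$, decompose the resulting $\R$-linear map into its $\C$-linear and $\C$-antilinear parts, and then show that the $\C$-antilinear part is uniquely absorbed by a unitary character.

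First I would observe that since $\C^{n}$ is simply connected, any smooth character $\alpha_{j}\colon\C^{n}\to\C^{*}$ lifts uniquely to an $\R$-linear functional $\lambda_{j}\colon \C^{n}\to\C$ with $\alpha_{j}(t)=\exp(\lambda_{j}(t))$. The $\R$-linear $\lambda_{j}$ decomposes uniquely as $\lambda_{j}=\lambda_{j}^{1,0}+\lambda_{j}^{0,1}$, with $\lambda_{j}^{1,0}$ $\C$-linear and $\lambda_{j}^{0,1}$ $\C$-antilinear. Thus $\alpha_{j}(t)=\exp(\lambda_{j}^{1,0}(t))\cdot\exp(\lambda_{j}^{0,1}(t))$, in which the first factor is holomorphic in $t$ and the second antiholomorphic.

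Next I would look for a unitary character $\beta_{j}(t)=\exp(\mu_{j}(t))$, again with $\mu_{j}=\mu_{j}^{1,0}+\mu_{j}^{0,1}$, such that $\alpha_{j}\beta_{j}^{-1}$ is holomorphic. Writing
\[
\alpha_{j}\beta_{j}^{-1}(t)=\exp\bigl((\lambda_{j}^{1,0}-\mu_{j}^{1,0})(t)+(\lambda_{j}^{0,1}-\mu_{j}^{0,1})(t)\bigr),
\]
holomorphicity in $t$ forces $\mu_{j}^{0,1}=\lambda_{j}^{0,1}$. The unitarity condition $|\beta_{j}|\equiv 1$ becomes $\mathrm{Re}(\mu_{j}^{1,0}(t)+\lambda_{j}^{0,1}(t))=0$ for all $t\in\C^{n}$, and since a $\C$-linear functional on $\C^{n}$ is recovered uniquely from its real part (its imaginary part being fixed by the Cauchy--Riemann relations), this equation admits the unique solution $\mu_{j}^{1,0}$ obtained by $\C$-linearly extending the $\R$-linear functional $-\mathrm{Re}(\lambda_{j}^{0,1})$. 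This yields the existence and uniqueness of $\beta_{j}$, and the fact that $\alpha_{j}\beta_{j}^{-1}$ is holomorphic is visible from the construction.

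Finally, applying the same argument to $\bar\alpha_{j}$, whose holomorphic and antiholomorphic parts are the conjugates of those of $\alpha_{j}$, produces the unique unitary character $\gamma_{j}$ such that $\bar\alpha_{j}\gamma_{j}^{-1}$ is holomorphic. I do not anticipate a serious obstacle here: the only nontrivial ingredient is the $(1,0)$/$(0,1)$ decomposition of an $\R$-linear functional together with the observation that a $\C$-linear functional is determined by its real part, both of which are purely linear-algebraic.
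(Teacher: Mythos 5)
Your proof is correct; the only caveat is that the paper does not actually prove this lemma but merely recalls it from \cite[Lemma 2.2]{kasuya-mathz}, where the argument is essentially the one you give: lift the character to an $\R$-linear functional, split it into $\C$-linear and $\C$-antilinear parts, and absorb the antilinear part into the unique unitary character $\beta_j=\exp\bigl(\lambda_j^{0,1}-\overline{\lambda_j^{0,1}}\bigr)$ (which is exactly your $\mu_j^{1,0}=-\overline{\lambda_j^{0,1}}$). So your proposal matches the intended proof; no gaps.
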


Hence, define the differential bi-graded sub-algebra $B^{\bullet,\bullet}_{\Gamma} \subset \wedge^{\bullet,\bullet} \solvmfd$, for $(p,q)\in\Z^2$, as
\begin{eqnarray}\label{eq:def-b}
B^{p,q}_{\Gamma} &:=& \bC\left\langle x_{I} \wedge \left(\alpha^{-1}_{J}\beta_{J}\right)\, y_{J} \wedge \bar x_{K} \wedge \left( \bar\alpha^{-1}_{L}\gamma_{L} \right) \, \bar y_{L}
\;\middle\vert\;
\left|I\right| + \left|J\right|=p \text{ and } \left|K\right| + \left|L\right| = q \right. \\[5pt]
\nonumber
&& \left. \text{ such that } \left.\left( \beta_{J}\gamma_{L} \right)\right\lfloor_{\Gamma} = 1
\right\rangle
\end{eqnarray}
(where we shorten, e.g., $\alpha_I:=\alpha_{i_1}\cdot\cdots\cdot\alpha_{i_k}$ and $x_I:=x_{i_1}\wedge\cdots\wedge x_{i_k}$ for a multi-index $I=\left(i_1,\ldots,i_k\right)$ of length $|I|=k$).

We recall the following result by the second author.

\begin{theorem}{\rm (\cite[Corollary 4.2]{kasuya-mathz})}\label{CORR}
Let $X = \solvmfd$ be a solvmanifold endowed with a $G$-left-invariant complex structure $J$ as in Assumption \ref{ass:solvmanifolds}.
Consider the differential bi-graded sub-algebra $B^{\bullet,\bullet}_{\Gamma} \subset \wedge^{\bullet,\bullet} \solvmfd$ defined in \eqref{eq:def-b}.
Then the inclusion $B^{\bullet,\bullet}_{\Gamma}\subset \wedge^{\bullet,\bullet}\solvmfd$ induces the cohomology isomorphism
\[
H^{\bullet,\bullet}_{\bar\partial}\left(B^{\bullet,\bullet}_{\Gamma}\right) \stackrel{\cong}{\to} H^{\bullet,\bullet}_{\bar\partial}\left(\solvmfd\right) \;.
\]
\end{theorem}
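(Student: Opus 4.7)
The plan is to exploit the holomorphic fibration $\pi \colon \solvmfd \to T := \left.\Gamma_{\C^n}\middle\backslash\C^n\right.$ with fiber the nilmanifold $\left.\Gamma_{N}\middle\backslash N\right.$, induced by projection $\C^n\ltimes_{\phi}N\to \C^n$. Assumption \ref{item:ass-5} reduces the fiberwise Dolbeault cohomology to the Chevalley--Eilenberg computation on $\n$, and the semi-simple action $\phi$ from \ref{item:ass-3} diagonalizes everything. The whole argument is then a Leray-type spectral-sequence comparison for $\pi$, where only those weights of the local system which become trivial after restriction to the lattice $\Gamma_{\C^n}$ survive in cohomology.

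First I would verify that $B^{\bullet,\bullet}_{\Gamma}$ is genuinely a sub-double-complex of $\wedge^{\bullet,\bullet}\solvmfd$. The individual monomials $\alpha_J^{-1}y_J$ and $\bar\alpha_L^{-1}\bar y_L$ are left-$G$-invariant, but are not $\Gamma$-invariant. Using Lemma~\ref{charr} one writes $\alpha_j=(\alpha_j\beta_j^{-1})\cdot\beta_j$ as holomorphic times unitary, and analogously for $\bar\alpha_l$. A direct computation shows that $x_I\wedge(\alpha_J^{-1}\beta_J)y_J\wedge \bar x_K\wedge(\bar\alpha_L^{-1}\gamma_L)\bar y_L$ is $\Gamma$-invariant precisely when $\left.(\beta_J\gamma_L)\right\rfloor_\Gamma=1$, which is the defining condition in \eqref{eq:def-b}. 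Compatibility with $\partial$ and $\bar\partial$ follows by expanding the differentials using the structure equations of $\g$: the brackets on $\n^{1,0}$ are $\phi$-equivariant by \ref{item:ass-2}, so the weights multiply and the ``holomorphic/unitary decomposition'' of each $\alpha_j$ is preserved.

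Next I would set up the Leray/Borel-type spectral sequence for $\pi$ and compute its $E_1$-page. The fiberwise Dolbeault cohomology of $\left.\Gamma_N\middle\backslash N\right.$ is, by \ref{item:ass-5}, the cohomology of $\wedge^{\bullet,\bullet}(\n\otimes_\R\C)^*$, which under $\phi$ decomposes into $\C^n$-weight spaces indexed by products of the characters $\alpha_J\bar\alpha_L^{-1}$. The $E_1$-page is then the $\bar\partial$-cohomology of the complex torus $T$ with coefficients in this graded local system, and by Fourier analysis on $T$ only those weight spaces whose character is unitary when restricted to $\Gamma_{\C^n}$ can contribute non-trivially to cohomology; the remaining weights produce only coboundaries. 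After applying the unitary correction $\alpha_j\mapsto \beta_j$, $\bar\alpha_l\mapsto \gamma_l$ from Lemma~\ref{charr}, the surviving classes are exactly those generated by the monomials in \eqref{eq:def-b}, realizing $B^{\bullet,\bullet}_{\Gamma}$ as the image of the spectral sequence at $E_1$. A map-of-spectral-sequences argument, applied to the inclusion $B^{\bullet,\bullet}_{\Gamma}\hookrightarrow\wedge^{\bullet,\bullet}\solvmfd$, then transports the isomorphism from $E_1$ to the abutment $H^{\bullet,\bullet}_{\bar\partial}(\solvmfd)$.

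The main obstacle is the Fourier step discarding the non-unitary (on $\Gamma_{\C^n}$) weights: one must genuinely control the inverse Laplacian on $T$ for twisted coefficients, in the spirit of Mostow's averaging for solvmanifolds, to show that a weight not killed by $\Gamma_{\C^n}$ gives an exact form at the level of smooth forms on the total space. An alternative route, which I would consider as a back-up, is to bypass the spectral sequence and construct directly, from a left-invariant $J$-Hermitian metric on $G$ suitably rescaled by the unitary parts $\beta_j,\gamma_l$, a family of harmonic representatives lying in $B^{\bullet,\bullet}_{\Gamma}$; one then verifies that no further harmonic classes can exist, by a dimension count using Serre duality and the analogous statement with $\alpha_j$ and $\bar\alpha_l$ swapped. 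Either way, once the invariance-by-$\Gamma_{\C^n}$ condition is shown to capture all of the Dolbeault cohomology, the conclusion $H^{\bullet,\bullet}_{\bar\partial}(B^{\bullet,\bullet}_{\Gamma})\stackrel{\cong}{\to}H^{\bullet,\bullet}_{\bar\partial}(\solvmfd)$ is immediate.
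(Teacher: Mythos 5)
First, a structural remark: Theorem \ref{CORR} is not proved in this paper at all --- it is recalled, with citation, from \cite[Corollary 4.2]{kasuya-mathz} --- so there is no internal proof to measure your argument against. Measured against the argument of the cited source, your overall strategy is the intended one: the proof there does go through the Mostow fibration $\solvmfd\to\left.\Gamma_{\C^n}\middle\backslash\C^n\right.$ with nilmanifold fibre $\left.\Gamma_N\middle\backslash N\right.$, uses condition (5) of Assumption \ref{ass:solvmanifolds} to replace the fibre cohomology by the Chevalley--Eilenberg complex of $\n$, uses the semisimplicity condition (3) to split everything into weight spaces, and reduces the computation to the Dolbeault cohomology of flat holomorphic line bundles over the base torus, which is where Lemma \ref{charr} and the condition $\left.\left(\beta_J\gamma_L\right)\right\lfloor_{\Gamma}=1$ come from. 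So the skeleton of your proposal is correct.

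There are, however, concrete defects. (a) The claim that the monomials $\alpha_J^{-1}y_J$ are ``left-$G$-invariant, but not $\Gamma$-invariant'' is false: left-$G$-invariance implies invariance under any subgroup. The actual point is that the generators of $B^{\bullet,\bullet}_{\Gamma}$ in \eqref{eq:def-b} are the left-invariant monomials multiplied by the extra character $\beta_J\gamma_L$ pulled back from $\C^n$, and it is this factor that descends to $\solvmfd$ exactly when it is trivial on $\Gamma$. (b) Your selection rule is misstated: what kills a weight space is not that ``the character is unitary when restricted to $\Gamma_{\C^n}$'' but that the \emph{unitary part} $\beta_J\gamma_L$ of the character is nontrivial on the lattice; you recover the right condition one sentence later, but as written the criterion is wrong. (c) The two genuinely hard steps are acknowledged but not carried out: a Borel-type Dolbeault spectral sequence for a holomorphic fibre bundle is classically justified for K\"ahler fibre, and a nilmanifold fibre is in general not K\"ahler, so the ``Leray-type comparison'' must be replaced by the explicit sub-complex argument of \cite{kasuya-mathz}; and the vanishing of the Dolbeault cohomology of the torus with values in a flat line bundle whose unitary part is nontrivial on the lattice --- which you correctly single out as the main obstacle --- is precisely the content of the theorem and is not supplied. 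As a road map your proposal is faithful to the cited proof; as a proof it is incomplete at exactly these points.
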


As regards the Bott-Chern cohomology, define $\bar B^{\bullet,\bullet}_{\Gamma} := \left\{ \bar \omega \in \wedge^{\bullet,\bullet}\solvmfd \;\vert\; \omega\in B^{\bullet,\bullet}_{\Gamma} \right\}$ and
\begin{equation}\label{eq:def-c}
C^{\bullet,\bullet}_{\Gamma} \;:=\; B^{\bullet,\bullet}_{\Gamma} + \bar B_{\Gamma}^{\bullet,\bullet} \;.
\end{equation}
The authors proved the following result.

\begin{theorem}[{\cite[Theorem 2.16]{angella-kasuya-1}}]\label{BCISO}
Let $\solvmfd$ be a solvmanifold endowed with a $G$-left-invariant complex structure $J$ as in Assumption \ref{ass:solvmanifolds}.
Consider $C^{\bullet,\bullet}_{\Gamma}$ as in \eqref{eq:def-c}.
Then the inclusion $C^{\bullet,\bullet}_{\Gamma}\subset \wedge^{\bullet,\bullet}\solvmfd$ induces the isomorphisms
\[
H^{\bullet,\bullet}_{\bar\partial}\left(C^{\bullet,\bullet}_{\Gamma}\right) \stackrel{\cong}{\to} H^{\bullet,\bullet}_{\bar\partial}\left(\solvmfd\right)
\qquad \text{ and } \qquad
H^{\bullet,\bullet}_{BC}(C_\Gamma^{\bullet,\bullet}) \stackrel{\cong}{\to} H^{\bullet,\bullet}_{BC}\left(\solvmfd\right) \;.
\]
\end{theorem}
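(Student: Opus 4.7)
The plan is to bootstrap the two isomorphisms in Theorem \ref{BCISO} from Theorem \ref{CORR}, its complex conjugate, and a general double-complex argument for the Bott-Chern part.

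First I would verify that $(C^{\bullet,\bullet}_{\Gamma},\partial,\bar\partial)$ is a sub-double-complex of $(\wedge^{\bullet,\bullet}\solvmfd,\partial,\bar\partial)$: since $B^{\bullet,\bullet}_{\Gamma}$ is a differential bi-graded sub-algebra, it is closed under both $\partial$ and $\bar\partial$, and since complex conjugation interchanges these operators together with bidegrees $(p,q)\leftrightarrow(q,p)$, the same holds for $\bar B^{\bullet,\bullet}_{\Gamma}$ and hence for the sum. Applying complex conjugation to Theorem \ref{CORR}, the inclusion $\bar B^{\bullet,\bullet}_{\Gamma}\hookrightarrow \wedge^{\bullet,\bullet}\solvmfd$ induces an isomorphism on the conjugate cohomology $H^{\bullet,\bullet}_{\partial}$.

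For the Dolbeault isomorphism, consider the sandwich $B^{\bullet,\bullet}_{\Gamma}\hookrightarrow C^{\bullet,\bullet}_{\Gamma}\hookrightarrow \wedge^{\bullet,\bullet}\solvmfd$, whose composition is a quasi-isomorphism by Theorem \ref{CORR}; this gives surjectivity of $H^{\bullet,\bullet}_{\bar\partial}(C^{\bullet,\bullet}_{\Gamma})\to H^{\bullet,\bullet}_{\bar\partial}(\solvmfd)$ for free. For injectivity I would take $\xi=\xi_{B}+\bar\eta\in C^{\bullet,\bullet}_{\Gamma}$ with $\xi_{B},\eta\in B^{\bullet,\bullet}_{\Gamma}$, $\bar\partial\xi=0$, and $\xi=\bar\partial\zeta$ for some global $\zeta$; using Lemma \ref{charr} and the character constraint $(\beta_{J}\gamma_{L})\lfloor_{\Gamma}=1$ in the definition of $B^{\bullet,\bullet}_{\Gamma}$, one constructs an explicit primitive inside $C^{\bullet,\bullet}_{\Gamma}$. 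The symmetric argument (swapping $\partial\leftrightarrow\bar\partial$ via conjugation) then shows that $C^{\bullet,\bullet}_{\Gamma}$ also computes $H^{\bullet,\bullet}_{\partial}(\solvmfd)$, so the inclusion $C^{\bullet,\bullet}_{\Gamma}\hookrightarrow\wedge^{\bullet,\bullet}\solvmfd$ is a quasi-isomorphism for both Fr\"olicher $E_{1}$ pages.

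For the Bott-Chern isomorphism I would then invoke a general principle for bounded double-complexes: a sub-complex inclusion inducing isomorphisms on both $H_{\partial}$ and $H_{\bar\partial}$ induces an isomorphism on $H_{BC}$ (and $H_{A}$) as well. Concretely, this can be obtained by chasing Varouchas-type long exact sequences relating $H_{BC}$, $H_{A}$, $H_{\partial}$, $H_{\bar\partial}$ and applying the five lemma, or alternatively by adapting Schweitzer's Hodge-theoretic argument (as used in the proof of Theorem \ref{thm:bc-deformations}) with a $G$-left-invariant Hermitian metric for which $C^{\bullet,\bullet}_{\Gamma}$ is stable under the Bott-Chern Laplacian $\tilde\Delta_{BC}$. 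The main obstacle throughout is the character bookkeeping: verifying in the injectivity step that the primitives one constructs actually lie in $C^{\bullet,\bullet}_{\Gamma}$, rather than merely in the larger space of $G$-left-invariant forms, requires a careful analysis of how the constraint $(\beta_{J}\gamma_{L})\lfloor_{\Gamma}=1$ interacts with complex conjugation. Once this technical step is settled, the Bott-Chern isomorphism follows essentially formally from the double-complex machinery.
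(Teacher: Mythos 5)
A preliminary but important point: this paper does not prove Theorem \ref{BCISO} at all. The statement is recalled verbatim from \cite[Theorem 2.16]{angella-kasuya-1} (``The authors proved the following result''), so there is no in-paper argument to compare yours against; the comparison has to be with the cited reference. Your skeleton is consistent with the strategy used there --- deduce the Dolbeault isomorphism for $C^{\bullet,\bullet}_{\Gamma}$ from Theorem \ref{CORR} together with the conjugation-invariance $\overline{C^{\bullet,\bullet}_{\Gamma}}=C^{\bullet,\bullet}_{\Gamma}$, then upgrade to Bott--Chern via a general criterion for sub-double-complexes computing both $H_{\bar\partial}$ and $H_{\partial}$ --- but the two steps that carry essentially all of the content are asserted rather than proved.

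First, the sandwich $B^{\bullet,\bullet}_{\Gamma}\subset C^{\bullet,\bullet}_{\Gamma}\subset\wedge^{\bullet,\bullet}\solvmfd$ only gives surjectivity of $H^{\bullet,\bullet}_{\bar\partial}(C^{\bullet,\bullet}_{\Gamma})\to H^{\bullet,\bullet}_{\bar\partial}(\solvmfd)$; injectivity is where the work is, and ``one constructs an explicit primitive inside $C^{\bullet,\bullet}_{\Gamma}$'' is a placeholder, not an argument (a $\bar\partial_t$-primitive of a form in $C^{\bullet,\bullet}_{\Gamma}$ that is exact on the manifold need not a priori lie in $C^{\bullet,\bullet}_{\Gamma}$). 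The actual mechanism is a character-indexed splitting of $C^{\bullet,\bullet}_{\Gamma}$ into $B^{\bullet,\bullet}_{\Gamma}$ plus a $\bar\partial$-stable complement spanned by the generators of $\bar B^{\bullet,\bullet}_{\Gamma}$ not already in $B^{\bullet,\bullet}_{\Gamma}$, and one must check that this complement is $\bar\partial$-acyclic; that verification (not Lemma \ref{charr} by itself) is the needed input. Second, the ``general principle'' you invoke --- a morphism of bounded double complexes inducing isomorphisms on both $H_{\partial}$ and $H_{\bar\partial}$ induces isomorphisms on $H_{BC}$ and $H_{A}$ --- is in fact true, but it is a genuine theorem and not a formal five-lemma consequence: the Varouchas-type sequences involve auxiliary quotients that are not directly controlled by the two Fr\"olicher $E_{1}$-pages, and closing the chase amounts to the structure theory of bounded double complexes, which was not available when \cite{angella-kasuya-1} was written. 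The reference instead proves injectivity of $H^{\bullet,\bullet}_{BC}(C^{\bullet,\bullet}_{\Gamma})\to H^{\bullet,\bullet}_{BC}(\solvmfd)$ directly from the two Dolbeault-type isomorphisms and recovers surjectivity by a duality argument with the Aeppli cohomology, using a left-invariant Hermitian metric whose operator $\bar*_{g}$ preserves $C^{\bullet,\bullet}_{\Gamma}$ --- which is exactly why hypothesis {\itshape (4)} on $\bar*_{g_t}$-stability appears in Theorems \ref{thm:dolb-deformations} and \ref{thm:bc-deformations}. Note also that your fallback via Schweitzer's Hodge theory only yields $\ker\tilde\Delta_{BC}\vert_{C^{\bullet,\bullet}_{\Gamma}}\subseteq\ker\tilde\Delta_{BC}$, i.e.\ injectivity again; without the duality (or the structure theorem) the surjectivity does not follow. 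So the route is the right one, but as written the proof has two real gaps.
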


\medskip

Another class of ``cohomologically-computable'' solvmanifolds is given by \emph{holomorphically parallelizable solvmanifolds}, namely, compact quotients of connected simply-connected complex solvable Lie groups by co-compact discrete subgroups, \cite{wang}, see also \cite{nakamura}.

Let $G$ be a connected simply-connected complex solvable Lie group admitting a lattice $\Gamma$, and denote by $2n$ the real dimension of $G$. Denote the Lie algebra naturally associated to $G$ by $\g$. 

Denote by $ \g_{+}$ (respectively, $ \g_{-}$) the Lie algebra of the $G$-left-invariant holomorphic (respectively, anti-holomorphic) vector fields on $G$.
As a (real) Lie algebra, we have an isomorphism  $ \g_{+} \cong \g_{-}$ by means of the complex conjugation.

Let $N$ be the nilradical of $G$.
We can take a connected simply-connected complex nilpotent subgroup $C\subseteq G$  such that $G=C\cdot N$, see, e.g., \cite[Proposition 3.3]{dek}.
Since $C$ is nilpotent, the map
\[C\ni c \mapsto ({\Ad}_{c})_{\mathrm{s}}\in {\Aut}(\g_{+})\]
is a homomorphism, where $({\Ad}_{c})_{\mathrm{s}}$ is the semi-simple part of the Jordan decomposition of ${\Ad}_{c}$.
We have a basis $\left\{ X_{1},\dots,X_{n} \right\}$ of $\g_{+}$ such that, for $c \in C$,
$$ ({\Ad}_{c})_{\mathrm{s}} \;=\; {\diag} \left(\alpha_{1}(c),\dots,\alpha_{n}(c) \right) \;, $$
for some characters $\alpha_1,\dots,\alpha_n$ of $C$.
By $G=C\cdot N$, we have $G/N=C/C\cap N$ and regard $\alpha_1,\dots,\alpha_n$ as characters of $G$.
Let $\left\{ x_{1},\dots, x_{n} \right\}$ be the basis of $\g^{*}_{+}$ which is dual to $\left\{ X_{1},\dots ,X_{n} \right\}$.

Let $B^{\bullet}_{\Gamma}$ be the sub-complex of $\left( \wedge^{0,\bullet} \solvmfd, \, \delbar \right) $ defined as
\begin{equation}\label{eq:def-b-holpar}
B^{\bullet}_{\Gamma} \;:=\; \left\langle \frac{\bar\alpha_{I}}{\alpha_{I} }\, \bar x_{I} \;\middle\vert\; I \subseteq \{1,\ldots,n\} \text{ such that } \left.\left(\frac{\bar\alpha_{I}}{\alpha_{I}}\right)\right\lfloor_{\Gamma}=1 \right\rangle
\end{equation}
(where we shorten, e.g., $\alpha_I:=\alpha_{i_1}\cdot\cdots\cdot\alpha_{i_{k}}$ and $x_I := x_{i_1}\wedge\cdots\wedge x_{i_k}$ for a multi-index $I=\left(i_1,\dots,i_k\right)$ of length $|I|=k$).

The second author proved the following result.

\begin{theorem}[{\cite[Corollary 6.2 and its proof]{kasuya-holpar}}]\label{MMTT}
Let $G$ be a connected simply-connected complex solvable Lie group admitting a lattice $\Gamma$.
Consider the finite-dimensional sub-complex $B^{\bullet}_{\Gamma} \subset \left( \wedge^{0,\bullet} \solvmfd, \, \delbar \right)$ defined in \eqref{eq:def-b-holpar}.
Then the inclusion $B^{\bullet}_{\Gamma} \hookrightarrow \wedge^{0,\bullet}\solvmfd$ induces the cohomology isomorphism
\[H^{\bullet}\left(B^{\bullet}_{\Gamma},\, \delbar\right) \stackrel{\cong}{\to} H_{\bar\partial}^{0,\bullet}(\solvmfd) \;.\]
\end{theorem}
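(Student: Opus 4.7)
The plan is to establish the theorem using the same two-step framework as Theorem \ref{CORR}: first verify that $B^{\bullet}_{\Gamma}$ is an honest sub-complex of $(\wedge^{0,\bullet}\solvmfd,\bar\partial)$, and then exhibit the inclusion as a quasi-isomorphism.

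For the first step, I would exploit the holomorphic parallelizability to write every $(0,q)$-form on $\solvmfd$ uniquely as $\sum_{|I|=q} f_I\,\bar x_I$ with $f_I \in C^\infty(\solvmfd;\C)$. Each $\alpha_I$ is a holomorphic character of $C$, hence descends to a holomorphic function on $G/N$, and its modulus-one companion $\bar\alpha_I/\alpha_I$ descends to $\solvmfd$ precisely when the selection condition $(\bar\alpha_I/\alpha_I)\lfloor_\Gamma = 1$ in the definition of $B^\bullet_\Gamma$ is satisfied. A direct computation of $\bar\partial\bigl((\bar\alpha_I/\alpha_I)\,\bar x_I\bigr)$, using the structure equations for the left-invariant anti-holomorphic frame together with the standard fact that the structure constants $c^{i}_{jk}$ of $\g_+$ in the chosen Jordan basis vanish unless $\alpha_i = \alpha_j \alpha_k$, shows that the image remains in the span of the generators and preserves the selection condition; hence $B^\bullet_\Gamma$ is indeed a sub-complex.

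For the second step, I would follow the strategy of \cite{kasuya-mathz}: construct an auxiliary connected simply-connected solvable Lie group $G'$, built from the same Lie-algebra data but with the semi-simple part of $\Ad$ twisted by the unitary characters $\bar\alpha_I/\alpha_I$, and such that (i) $\Gamma$ remains a lattice in $G'$, and (ii) $G'$ is completely solvable. A Hattori-type theorem, adapted to the Dolbeault setting as in \cite{kasuya-holpar}, then asserts that $G'$-left-invariant $(0,\bullet)$-forms compute $H^{0,\bullet}_{\bar\partial}(\solvmfd)$. One identifies the space of such invariant forms with the full span $\langle (\bar\alpha_I/\alpha_I)\,\bar x_I\rangle_I$, after which the $\Gamma$-descent condition cuts out exactly $B^\bullet_\Gamma$.

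The principal obstacle lies in the construction and justification of the twisted group $G'$ together with the identification of its left-invariant $(0,\bullet)$-forms with the proposed generators. This requires a careful application of the Mostow structure theorem to the decomposition $G=C\cdot N$, and a Fourier/character decomposition argument along the $C$-directions to eliminate the higher modes that do not satisfy the $\Gamma$-invariance condition. Both ingredients are implicit in the second author's prior work but genuinely carry the weight of the proof.
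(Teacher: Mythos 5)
You should first note that the paper does not prove Theorem~\ref{MMTT} at all: it is recalled verbatim from \cite[Corollary~6.2]{kasuya-holpar}, so there is no internal argument to compare yours against. Judged on its own terms, your first step (that $B^{\bullet}_{\Gamma}$ is a sub-complex of $\left(\wedge^{0,\bullet}\solvmfd,\bar\partial\right)$) is correct and essentially routine: the $\alpha_j$ are holomorphic characters, the selection condition $\left(\bar\alpha_I/\alpha_I\right)\lfloor_{\Gamma}=1$ is exactly the descent condition, and closedness under $\bar\partial$ follows from the automorphism property of $(\Ad_c)_{\mathrm{s}}$ as you say.

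The gap is in the second step, which is where the entire content of the theorem lives: you must show that the infinitely many modes of $C^{\infty}(\solvmfd)$ attached to characters with $\left(\bar\alpha_I/\alpha_I\right)\lfloor_{\Gamma}\neq 1$ contribute nothing to $H^{0,\bullet}_{\bar\partial}$, and your proposal only names the tools (a twisted group $G'$, a ``Hattori-type theorem adapted to the Dolbeault setting'', a Fourier decomposition along $C$) without establishing any of them or even stating the acyclicity claim they are supposed to deliver. Moreover, requirement (ii) in your construction --- that $G'$ be completely solvable --- is unjustified and in general unattainable for a genuinely complex solvable $G$, and it is not what the cited argument uses: the operative facts are that the twisting characters are \emph{unitary} and that cohomology of local systems induced by unitary characters nontrivial on $\Gamma$ vanishes, which rests on Mostow-type theorems for the Mostow fibration rather than on complete solvability. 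You should also make explicit the identification $H^{0,\bullet}_{\bar\partial}(\solvmfd)\cong H^{\bullet}\left(\g_{-};C^{\infty}(\solvmfd)\right)$ coming from holomorphic parallelizability, since that is the bridge that makes any Lie-theoretic or character-decomposition argument applicable in the first place. As written, the proposal is a correct road map to the reference's proof, not a proof.
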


As regards Bott-Chern cohomology, define $\bar B^{\bullet}_{\Gamma} := \left\langle \frac{\alpha_{I}}{\bar\alpha_{I} } \, x_{I} \;\middle\vert\; I \subseteq \{1,\ldots,n\} \text{ such that } \left.\left(\frac{\alpha_{I}}{\bar\alpha_{I}}\right)\right\lfloor_{{\Gamma}}=1\right\rangle$, and
\begin{equation}\label{eq:def-c-cplx-par}
C^{\bullet_1,\bullet_2}_\Gamma \;:=\; \wedge^{\bullet_1} \g_{+}^{*} \otimes B^{\bullet_2}_{\Gamma} + \bar B^{\bullet_1}_{\Gamma} \otimes \wedge^{\bullet_2} \g_{-}^{*} \;.
\end{equation}
The authors proved the following result.

\begin{theorem}[{\cite[Theorem 2.24]{angella-kasuya-1}}]\label{palBCISO}
Let $G$ be a connected simply-connected complex solvable Lie group admitting a lattice $\Gamma$. Consider the finite-dimensional sub-double-complex $C^{\bullet,\bullet}_{\Gamma}\subset \wedge^{\bullet,\bullet}\solvmfd$ defined in \eqref{eq:def-c-cplx-par}.
Then the inclusion $C^{\bullet,\bullet}_{\Gamma} \hookrightarrow \wedge^{\bullet,\bullet}\solvmfd$ induces the cohomology isomorphism
\[ H^{\bullet,\bullet}_{BC}\left( C_{\Gamma}^{\bullet,\bullet} \right) \stackrel{\cong}{\to}  H^{\bullet,\bullet}_{BC}(\solvmfd) \;.\]
\end{theorem}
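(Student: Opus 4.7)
The plan is to adapt the scheme of Theorem \ref{BCISO} for solvmanifolds of splitting-type to the holomorphically parallelizable setting, using as input the Dolbeault isomorphism of Theorem \ref{MMTT} together with Schweitzer's Hodge theory for Bott-Chern cohomology. I would begin by observing that, since $\solvmfd$ is holomorphically parallelizable with $\bar\partial$-closed global coframe $\{x_1,\ldots,x_n\}$, the Dolbeault complex splits bigraded-wise as
\[
(\wedge^{\bullet,\bullet}\solvmfd,\,\bar\partial)\;\cong\;\wedge^\bullet\g_{+}^{*}\otimes_\C(\wedge^{0,\bullet}\solvmfd,\,\bar\partial),
\]
with $\bar\partial$ acting only on the right-hand factor. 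Combined with Theorem \ref{MMTT}, this gives $H^{p,q}_{\bar\partial}(\solvmfd)\cong\wedge^p\g_{+}^*\otimes H^q(B^\bullet_\Gamma)$, so the sub-complex $L^{\bullet,\bullet}:=\wedge^\bullet\g_+^*\otimes B^\bullet_\Gamma\subseteq C^{\bullet,\bullet}_\Gamma$ is $\bar\partial$-quasi-isomorphic to $\wedge^{\bullet,\bullet}\solvmfd$; by complex conjugation, $\bar L^{\bullet,\bullet}:=\bar B^\bullet_\Gamma\otimes\wedge^\bullet\g_-^*\subseteq C^{\bullet,\bullet}_\Gamma$ is $\partial$-quasi-isomorphic to $\wedge^{\bullet,\bullet}\solvmfd$.

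Next, I would choose a $G$-left-invariant Hermitian metric $g$ making the frame $\{X_1,\ldots,X_n\}$ unitary at each point, and verify that the anti-$\C$-linear Hodge star $\bar*_g$ interchanges $L^{\bullet,\bullet}$ with $\bar L^{n-\bullet,n-\bullet}$, hence preserves $C^{\bullet,\bullet}_\Gamma$. This relies on the computation $\bar*_g(x_I\wedge\bar x_J)=\pm\, x_{J^c}\wedge\bar x_{I^c}$ together with the $\bar\C$-antilinearity of $\bar*_g$ on functions: a character factor $\bar\alpha_I/\alpha_I$ in a generator of $B^\bullet_\Gamma$ is carried to $\alpha_I/\bar\alpha_I$, matching the shape of a generator of $\bar B^\bullet_\Gamma$, with the lattice-triviality condition transforming into its complex conjugate.

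With this metric, Schweitzer's Bott-Chern Laplacian $\tilde\Delta_{BC}$ restricts to $C^{\bullet,\bullet}_\Gamma$ as a formally self-adjoint operator, and a standard Hodge-theoretic argument identifies $\ker(\tilde\Delta_{BC}|_{C^{\bullet,\bullet}_\Gamma})$ both with $H^{\bullet,\bullet}_{BC}(C^{\bullet,\bullet}_\Gamma)$ and with its image in $H^{\bullet,\bullet}_{BC}(\solvmfd)$. I would then close the argument by comparing with the Dolbeault and $\partial$ isomorphisms from the first step through the natural maps $H^{\bullet,\bullet}_{BC}\to H^{\bullet,\bullet}_{\bar\partial}$ and $H^{\bullet,\bullet}_{BC}\to H^{\bullet,\bullet}_{\partial}$, supplemented by explicit $\partial\bar\partial$-primitives in $C^{\bullet,\bullet}_\Gamma$ constructed from the character structure of $B^\bullet_\Gamma$ and $\bar B^\bullet_\Gamma$, so as to bound both the kernel and the cokernel of the inclusion-induced map on $H_{BC}$.

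The main obstacle I anticipate is precisely the $\bar*_g$-stability of $C^{\bullet,\bullet}_\Gamma$: the character conditions defining $B^\bullet_\Gamma$ and $\bar B^\bullet_\Gamma$ are phrased asymmetrically in terms of holomorphic and antiholomorphic multi-indices, and must be checked to match under $\bar*_g$; the lattice-triviality clauses for the character factors need to be handled with care so that complex conjugation carries $B^\bullet_\Gamma$-generators to $\bar B^\bullet_\Gamma$-generators. Once this compatibility is confirmed, the remaining arguments are routine adaptations of the Hodge-theoretic scheme underlying Theorem \ref{BCISO} and Theorem \ref{thm:bc-deformations}.
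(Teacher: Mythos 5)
First, a caveat: the paper does not prove Theorem \ref{palBCISO} at all --- it is recalled verbatim from \cite[Theorem 2.24]{angella-kasuya-1} --- so there is no in-paper proof to compare against, and your proposal has to be judged on its own. Its skeleton (the tensor-product splitting of the Dolbeault complex of a parallelizable manifold, Theorem \ref{MMTT}, conjugation, $\bar*_g$-stability of $C^{\bullet,\bullet}_\Gamma$, and harmonic theory for $\tilde\Delta_{BC}$) is the right collection of ingredients, but two steps do not hold up. The smaller one: $\bar*_g$ does \emph{not} interchange $L^{\bullet,\bullet}=\wedge^\bullet\g_+^*\otimes B^\bullet_\Gamma$ with $\bar B^{\bullet}_\Gamma\otimes\wedge^{\bullet}\g_-^*$. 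Computing on a generator, $\bar*_g\bigl(x_J\wedge\frac{\bar\alpha_I}{\alpha_I}\bar x_I\bigr)=\pm\frac{\alpha_I}{\bar\alpha_I}\,x_{J^c}\wedge\bar x_{I^c}$, whose character factor is indexed by $I$, not by $J^c$, so it has the wrong shape to lie in $\bar B^{\bullet}_\Gamma\otimes\wedge^\bullet\g_-^*$. What is true is that $\bar*_g$ preserves each summand separately: since $G$ is unimodular and $c\mapsto\det{}_\C\Ad_c\vert_{\g_+}$ is a holomorphic character of constant modulus one, $\alpha_1\cdots\alpha_n\equiv1$, whence $\frac{\alpha_I}{\bar\alpha_I}=\frac{\bar\alpha_{I^c}}{\alpha_{I^c}}$ and the lattice-triviality condition passes from $I$ to $I^c$. (Check this against Table 2 for the Nakamura manifold: $\bar*_g(\esp^{-2z_1}\de z_{2\bar2})\propto\esp^{2z_1}\de z_{13\bar1\bar3}$, which again lies in $\wedge^2\g_+^*\otimes B^2_\Gamma$.) The stability $\bar*_g(C^{\bullet,\bullet}_\Gamma)\subseteq C^{n-\bullet,n-\bullet}_\Gamma$ survives, but for a different reason than you give, and the unimodularity input must be made explicit.

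The genuine gap is the last step. Your harmonic-theoretic argument yields only one inequality: $H^{\bullet,\bullet}_{BC}(C^{\bullet,\bullet}_\Gamma)\cong\ker\bigl(\tilde\Delta_{BC}\vert_{C_\Gamma}\bigr)\hookrightarrow\ker\tilde\Delta_{BC}\cong H^{\bullet,\bullet}_{BC}(\solvmfd)$, i.e.\ injectivity. Surjectivity is equivalent to $\ker\tilde\Delta_{BC}\subseteq C^{\bullet,\bullet}_\Gamma$, and this cannot be extracted by ``comparing through the natural maps $H_{BC}\to H_{\bar\partial}$ and $H_{BC}\to H_\partial$'': those maps are in general neither injective nor surjective, so no bound on the cokernel follows from the Dolbeault isomorphisms along them. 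Moreover, your first step only establishes the $\bar\partial$-quasi-isomorphism for the summand $\wedge^\bullet\g_+^*\otimes B^\bullet_\Gamma$, not for $C^{\bullet,\bullet}_\Gamma$ itself, which is what any such argument would require. The missing mechanism is the one underlying Theorem \ref{BCISO} and Theorem \ref{thm:bc-deformations}: because $C^{\bullet,\bullet}_\Gamma$ is finite-dimensional and stable under $\partial$, $\bar\partial$, $\partial^*$, $\bar\partial^*$, its $L^2$-orthogonal complement $W$ inside smooth forms is a $(\partial,\bar\partial)$-stable complement, so all cohomologies split as direct sums; the two quasi-isomorphism statements (for $\bar\partial$ \emph{and}, by conjugation-stability of $C^{\bullet,\bullet}_\Gamma$, for $\partial$) force $H_{\bar\partial}(W)=H_{\partial}(W)=0$, and one then needs the algebraic lemma that a bounded double complex with vanishing row and column cohomologies has vanishing Bott--Chern cohomology. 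Without that lemma (or an equivalent explicit decomposition of $W$ into squares), the implication from Dolbeault to Bott--Chern simply is not there --- it is precisely the nontrivial content of the theorem.
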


\medskip

Therefore, by Theorem \ref{thm:dolb-deformations} and Theorem \ref{thm:bc-deformations}, we get the following result, for which we provide explicit applications in the following.

\begin{corollary}
 Let $X$ be either a solvmanifold of splitting-type or a holomorphically parallelizable solvmanifold. Then the Dolbeault cohomology and the Bott-Chern cohomology both of $X$ and of some suitable small deformations of $X$ are computable by means of a finite-dimensional sub-double-complex of $\left( \wedge^{\bullet,\bullet}X,\, \del,\, \delbar\right)$.
\end{corollary}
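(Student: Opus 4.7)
My plan is to apply Theorems \ref{thm:dolb-deformations} and \ref{thm:bc-deformations} to smooth families $\{C^{\bullet,\bullet}_t\}_{t\in B}$ of finite-dimensional sub-double-complexes of $\wedge^{\bullet,\bullet}_{J_t}M$, with $C^{\bullet,\bullet}_0$ equal to the sub-complex $C^{\bullet,\bullet}_\Gamma$ furnished by Theorem \ref{BCISO} in the splitting-type case, or by Theorem \ref{palBCISO} in the holomorphically parallelizable case. Once hypotheses (1)--(4) of those theorems are verified for this family, the conclusion follows at once for both Dolbeault and Bott-Chern cohomology simultaneously.

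The construction of $C^{\bullet,\bullet}_t$ is the natural one: each generator of $C^{\bullet,\bullet}_\Gamma$ in \eqref{eq:def-c} (resp.\ in \eqref{eq:def-c-cplx-par}) is built from a basis of $\g^{1,0}$ and from the characters $\alpha_j,\beta_j,\gamma_j$ of $\C^n$ (resp.\ the characters $\alpha_j$ of $C$), all of which depend smoothly on the complex structure. Replacing $J$ by $J_t$ throughout---by projecting the basis vectors to the $J_t$-holomorphic part of $\g\otimes_\R\C$ and deforming the characters along the new weights of the abelian action---yields a smooth family of generators $\phi_i(t)$ of $C^{\bullet,\bullet}_t$ on $M\times B$, which realises condition (2). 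Conditions (1) and (3) are then essentially immediate: condition (1) holds because the structure equations of $\g$ evaluated on the deformed generators retain the same combinatorial shape as at $t=0$, so $\del_t$ and $\delbar_t$ stabilise $C^{\bullet,\bullet}_t$; condition (3) at $t=0$ is exactly what Theorem \ref{BCISO} (resp.\ Theorem \ref{palBCISO}) asserts.

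The main obstacle I anticipate is condition (4): producing a smooth family $\{g_t\}_{t\in B}$ of $J_t$-Hermitian metrics with $\bar*_{g_t}(C^{\bullet,\bullet}_t)\subseteq C^{n-\bullet,n-\bullet}_t$. My candidate is the left-invariant Hermitian metric on $G$ that makes $\{X_i(t),\,|\alpha_j(t)|^{-1}Y_j(t)\}$ a $g_t$-unitary frame of $\g^{1,0}_{J_t}$; one then has to check that on the monomial generators of $C^{\bullet,\bullet}_t$ the operator $\bar*_{g_t}$ acts by multi-index complementation together with replacement of each character factor by its complex conjugate, so that the defining constraint $\beta_J\gamma_L|_\Gamma = 1$ is preserved under passage to the complementary selectors---a closed-form consequence of Lemma \ref{charr} and of the total unitarity of the product $\prod_j\beta_j\gamma_j|_\Gamma$. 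The holomorphically parallelizable case is strictly analogous, using the decomposition $\g\otimes_\R\C = \g_+\oplus\g_-$ and the characters in \eqref{eq:def-b-holpar}. Smoothness of $g_t$ in $t$ is transparent from the construction, and all four hypotheses are thereby verified, which completes the application of Theorems \ref{thm:dolb-deformations} and \ref{thm:bc-deformations}.
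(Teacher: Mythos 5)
Your overall strategy --- apply Theorems \ref{thm:dolb-deformations} and \ref{thm:bc-deformations} to a deformed family $C^{\bullet,\bullet}_t$ whose fibre at $t=0$ is the complex furnished by Theorem \ref{BCISO} (splitting-type case) or Theorem \ref{palBCISO} (holomorphically parallelizable case), with condition (3) supplied by those theorems and condition (4) checked against a left-invariant diagonal metric --- is exactly the paper's. The paper, however, proves the corollary by nothing more than this observation: the word ``suitable'' in the statement is doing real work, restricting attention to those deformations for which a family $C^{\bullet,\bullet}_t$ satisfying hypotheses (1)--(4) can actually be exhibited, and the verification is then carried out example by example in the subsequent sections.

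The genuine gap in your argument is the claim that condition (1) is ``essentially immediate'' because the structure equations evaluated on the deformed generators ``retain the same combinatorial shape as at $t=0$.'' This is false for general deformation directions, and the paper itself records a counterexample: for the deformations of the holomorphically parallelizable Nakamura manifold in case {\itshape (2)} (direction $t\,\frac{\partial}{\partial z_{1}}\otimes \esp^{z_{1}}\de \bar z_{3}$), the naturally deformed space $C^{\bullet,\bullet}_{\Gamma}(t)$ is \emph{not} a sub-double-complex for $t\neq 0$, and enlarging it to be stable under $\del_t$, $\delbar_t$, and $\bar*_{g_t}$ appears to destroy finite-dimensionality, so the Bott-Chern computation is unavailable there. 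Relatedly, your recipe of ``deforming the characters along the new weights of the abelian action'' tacitly assumes the deformed structure is still left-invariant of splitting type; by Hasegawa's theorem quoted in the paper, the case {\itshape (2)} deformations are not even left-invariant. The fix is simply to weaken your claim to match the statement: for those (nonempty, as the explicit examples of Sections \ref{naka} and \ref{SYE} show) families of deformations for which (1)--(4) hold, the conclusion follows at once from Theorems \ref{thm:dolb-deformations} and \ref{thm:bc-deformations}; no uniform construction valid for all deformations is needed, and none is available.
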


\medskip

We note that small deformations of a holomorphically parallelizable solvmanifolds does not necessarily remain holomorphically parallelizable. This was firstly proved by I. Nakamura, providing explicit examples on the Iwasawa manifold, \cite[page 86, page 96]{nakamura}. In \cite{rollenske-jems}, S. Rollenske studied conditions for which a small deformation of a holomorphically parallelizable nilmanifold is still holomorphically parallelizable, \cite[Theorem 5.1]{rollenske-jems}, proving that non-tori holomorphically parallelizable nilmanifolds admit non-holomorphically parallelizable small deformations, \cite[Corollary 5.2]{rollenske-jems}.
We prove that the same holds true for holomorphically parallelizable solvmanifolds.
\begin{theorem}
Let $X=\solvmfd$ be a holomorphically parallelizable solvmanifold which is not a torus.
Then there exists a non-holomorphically parallelizable small deformation of $\solvmfd$.

\end{theorem}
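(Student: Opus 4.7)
My plan is to adapt S.~Rollenske's approach for holomorphically parallelizable nilmanifolds, \cite[Corollary 5.2]{rollenske-jems}, to the solvable setting by constructing an explicit left-invariant small deformation $\{J_t\}$ of $J$ and proving that $(\solvmfd, J_t)$ is not holomorphically parallelizable for $t\neq 0$ small. Since $X$ is not a torus, the complex Lie algebra $\g_{+}$ of left-invariant $(1,0)$-vector fields is non-abelian; by solvability, $[\g_{+},\g_{+}]$ is a proper non-zero subalgebra of $\g_{+}$. I would choose a basis $\{X_1,\dots,X_n\}$ of $\g_{+}$ with dual basis $\{x_1,\dots,x_n\}$ of $\g_{+}^*$, together with indices $i_0\neq j_0$ satisfying: (a) $X_{i_0}$ is non-central, i.e.\ $[X_a, X_{i_0}]\neq 0$ for some $a$ (ensured by non-abelianness); (b) $dx_{j_0}=0$, equivalently $x_{j_0}$ annihilates $[\g_{+},\g_{+}]$ (ensured by properness of the derived ideal); and (c) $[\bar x_{j_0}]$ is a non-zero class in $H^{0,1}_{\bar\partial}(\solvmfd)$, which by Theorem \ref{MMTT} corresponds to the character condition $\left.(\bar\alpha_{j_0}/\alpha_{j_0})\right\lfloor_{\Gamma}=1$.

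Define the left-invariant form $\phi_t := t\,\bar x_{j_0}\otimes X_{i_0}$, a smooth family of sections of $\wedge^{0,1}\solvmfd\otimes T^{1,0}\solvmfd$. The Maurer--Cartan equation $\bar\partial\phi_t+\tfrac{1}{2}[\phi_t,\phi_t]=0$ holds since $[X_{i_0}, X_{i_0}]=0$ and $\bar\partial\bar x_{j_0}=0$ (from (b), noting $d\bar x_{j_0}=\overline{dx_{j_0}}=0$), so $\{J_t\}$ is a genuine family of complex structures on $X$ with $J_0=J$. To prove non-parallelizability, I would compute the Kodaira--Spencer obstruction to extending the holomorphic parallelization. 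Using the holomorphic triviality $T^{1,0}\solvmfd\cong\solvmfd\times\g_{+}$, one has $H^{0,1}_{\bar\partial}(\solvmfd, T^{1,0}\solvmfd)\cong H^{0,1}_{\bar\partial}(\solvmfd)\otimes\g_{+}$, computable via the scalar finite-dimensional model of Theorem \ref{MMTT}. The cup product with $[\phi_1]=[\bar x_{j_0}]\otimes X_{i_0}$ induces the obstruction map $H^0(\solvmfd, T^{1,0}\solvmfd)\cong\g_{+}\to H^{0,1}_{\bar\partial}(\solvmfd, T^{1,0}\solvmfd)$ sending $X_a$ to $[\bar x_{j_0}]\otimes[X_a, X_{i_0}]$, which is non-zero for some $X_a$ by (a) and (c). By upper-semi-continuity of $\dim H^0(\solvmfd, T^{1,0}\solvmfd_{J_t})$ together with the standard interpretation of this obstruction, $\dim H^0(\solvmfd, T^{1,0}\solvmfd_{J_t})<n$ for $t\neq 0$ small; since a compact complex parallelizable manifold of complex dimension $n$ necessarily has $\dim H^0(T^{1,0})=n$ (as $T^{1,0}\cong\mathcal{O}^n$ forces $H^0=\C^n$), it follows that $(\solvmfd, J_t)$ is not holomorphically parallelizable.

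The main obstacle is condition (c). Unlike Rollenske's nilpotent case, where all characters $\alpha_i$ are trivial and the condition is automatic, in the solvable setting this is a non-trivial compatibility between $j_0$ and the lattice $\Gamma$. If no admissible $j_0$ exists with the first chosen basis, one can either adjust the basis within the $\alpha$-weight decomposition arising in Kasuya's construction so that $X_{j_0}$ lies in the nilradical direction (where $\alpha_{j_0}\equiv 1$ automatically), or, when necessary, pass to a finite-index sublattice on which $\bar\alpha_{j_0}/\alpha_{j_0}$ restricts trivially. This additional bookkeeping, together with the routine reduction of the bundle-valued Kodaira--Spencer argument to the scalar setting via triviality of $T^{1,0}$, constitutes the only essential technical complication beyond Rollenske's original nilpotent argument.
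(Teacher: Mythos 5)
Your route is genuinely different from the paper's. The paper first reduces to the non-nilmanifold case by quoting \cite[Corollary 5.2]{rollenske-jems}, takes the splitting $G=A\ltimes G'$ with $A\cong\C$ acting through a non-trivial character $\esp^{a_2 z}$, deforms along $t\,\frac{\partial}{\partial z}\otimes\de\bar z$, and then \emph{directly computes} $\bar\partial_t\left(\esp^{a_2 z}x'_2\right)\neq 0$ inside the deformed finite-dimensional complex $\wedge^\bullet\g_{+}^*(t)\otimes B^\bullet_\Gamma(t)$, which computes $H^{\bullet,\bullet}_{\bar\partial_t}$ by Theorem \ref{thm:dolb-deformations}; the conclusion is $\dim_\C H^{1,0}_{\bar\partial_t}(\solvmfd)<n$, which rules out parallelizability. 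You instead run the classical first-order Kodaira--Spencer obstruction on $H^0(\Theta)$ combined with upper semicontinuity and local freeness of the direct image, detecting the drop of $h^0(\Theta_{J_t})$ rather than of $h^{1,0}_{\bar\partial_t}$. Your argument is uniform in the nilpotent and non-nilpotent cases and does not need Theorem \ref{thm:dolb-deformations}, at the price of invoking the extension-of-sections machinery that the paper's direct computation avoids. Note that the paper's deformation is exactly your $t\,\bar x_{j_0}\otimes X_{i_0}$ with $i_0=j_0=1$, so your requirement $i_0\neq j_0$ is an unnecessary (though always satisfiable, hence harmless) restriction.

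There is, however, a genuine problem in your treatment of condition \emph{(c)}, which you yourself single out as the main obstacle. Your first fallback rests on the claim that if $X_{j_0}$ lies in the nilradical direction then $\alpha_{j_0}\equiv 1$; this is false: in the holomorphically parallelizable Nakamura manifold the nilradical directions carry precisely the non-trivial characters $\esp^{\pm z_1}$ (witness the factors $\esp^{\mp z_1}$ in $B^{1,0}_\Gamma$), and it is the complementary direction $\de z_1$ that has trivial character. Your second fallback, passing to a finite-index sublattice $\Gamma'\subset\Gamma$ on which $\bar\alpha_{j_0}/\alpha_{j_0}$ restricts trivially, changes the manifold: it produces a non-parallelizable deformation of the finite cover $\left.\Gamma'\middle\backslash G\right.$, not of $\solvmfd$. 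As written, the proof is therefore incomplete whenever \emph{(c)} fails for your chosen basis.

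The gap is fillable, because \emph{(c)} is in fact automatic once \emph{(b)} is imposed inside the weight basis. For any $c\in G$ one has $\Ad_c(X)-X\in[\g_{+},\g_{+}]$, so $\Ad_c$, and hence its semisimple part, induces the identity on $\g_{+}/[\g_{+},\g_{+}]$; dually, $(\Ad_c)_{\mathrm{s}}^*$ fixes the annihilator of $[\g_{+},\g_{+}]$ in $\g_{+}^*$ pointwise. Hence the diagonalizing basis $\{x_1,\dots,x_n\}$ of Theorem \ref{MMTT} may be chosen so that a sub-basis spans this annihilator, and every such $x_{j_0}$ has $\alpha_{j_0}\equiv 1$; the corresponding generator of $B^1_\Gamma$ is then $\bar x_{j_0}$ itself, and $[\bar x_{j_0}]\neq 0$ in $H^{0,1}_{\bar\partial}(\solvmfd)$ because $\bar x_{j_0}$ is $\delbar$-closed and $B^0_\Gamma=\C$. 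With this correction your conditions \emph{(a)}--\emph{(c)} are simultaneously satisfiable for every non-abelian $\g_{+}$, and the remainder of your argument goes through.
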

\begin{proof}
By \cite[Corollary 5.2]{rollenske-jems}, we can assume that $\solvmfd$ is not a nilmanifold.
Take a connected simply-connected complex nilpotent subgroup $C\subset G$ such that $G = C \cdot N$, where $N$ is the nilradical of $G$.
We can take a  $1$-dimensional complex Lie subgroup $A\cong\bC$ with $A\subset C$ and a $1$-codimensional complex Lie subgroup $G^{\prime}$ with $N\subset G^{\prime}$ such that we have decomposition $G=A\ltimes G^{\prime}$.
Take a basis $\{x_{1},\dots,x_{n}\}$ of $\g_{+}^{*}$ which diagonalizes the semi-simple part of the $C$-action (where $\g_+$ denotes the Lie algebra of $G$-left-invariant holomorphic vector fields on $G$).
With respect to the above decomposition, we can take $x_{1}=\de z$ for a coordinate $z$ of the $1$-dimensional complex Lie subgroup $A$, and  $x_{2}=\esp^{a_{2}\,z}x^{\prime}_{2}$ for a non-trivial character $\esp^{a_{2}\,z}$ of $A$ and a holomorphic form $x^{\prime}_{j}$ on $G^{\prime}$, by trigonalizing the $A$-action.
Then the Dolbeault cohomology of $\left. \Gamma \middle\backslash G \right.$ is computed by means of 
\[ C^{\bullet,\bullet}_0 \;:=\; \wedge^{\bullet}\g_{+}^{*}\otimes B_{\Gamma}^{\bullet} \]
where
\[B_{\Gamma}^{\bullet} \;:=\;  \left\langle \frac{\bar\alpha_{I}}{\alpha_{I} }\, \bar x_{I} \;\middle\vert\; I \subseteq \{1,\ldots,n\} \text{ such that } \left.\left(\frac{\bar\alpha_{I}}{\alpha_{I}}\right)\right\lfloor_{\Gamma}=1 \right\rangle  \;,\]
(and where we shorten, e.g., $\alpha_I := \alpha_{i_1}\cdot\cdots\cdot\alpha_{i_k}$ and $x_I := x_{i_1}\wedge\cdots\wedge x_{i_k}$ for a multi-index $I=(i_1,\ldots,i_k)$ of length $|I|=k$).

We consider the family $\{J_{t}\}_t$ of deformations given by
$$ t\,\frac{\partial}{\partial z}\otimes \de \bar z \;\in\; H^{0,1}\left(X; T^{1,0}X\right) \;. $$

Then, for any $t$, we consider the double-complex
\[ D^{\bullet,\bullet}_t \;:=\; \wedge^{\bullet}\g_{+}^{*}(t)\otimes B_{\Gamma}^{\bullet}(t)\]
so that
\[\wedge^{\bullet}\g_{+}^{*}(t) \;=\; \wedge^{\bullet} \langle \de  z-t\,\de \bar z,\; x_{2},\;\dots,\; x_{n}\rangle\]
and 
\[B_{\Gamma}^{\bullet}(t) \;=\; \wedge\langle \de \bar z-\bar t\,\de  z\rangle\otimes  \left\langle \frac{\bar\alpha_{I}}{\alpha_{I} }\, \bar x^{\prime}_{I} \;\middle\vert\; I \subseteq \{2,\ldots,n\} \text{ such that } \left(\frac{\bar\alpha_{I}}{\alpha_{I}}\right)\lfloor_{\Gamma}=1 \right\rangle  \;,\]
and the $J_t$-Hermitian metrics
\[g_{t} \;:=\; \left(\de  z-t\,\de \bar z\right) \odot \left(\de \bar z-\bar t\,\de  z\right) + \sum_{j=2}^{n} x_{j}\odot\bar x_{j} \;.
\]
We can apply Theorem \ref{thm:dolb-deformations}.

Now we have
\[\bar\partial_{t} \left(\esp^{a_{2}\,z}x^{\prime}_{2}\right)\;=\;\frac{a_{2}\,t\,\left(\de \bar z-\bar t\,\de  z\right)}{1-\vert t\vert^{2}} \esp^{a_{2}\,z}x^{\prime}_{2} \;.\]
Hence we have, for $t\neq0$,
\[H^{1,0}_{\bar\partial_{t}}\left(\left. \Gamma \middle\backslash G \right.\right) \;=\; \ker \left. \bar\partial_{t} \right\lfloor_{\wedge^{1}\g_{+}^{*}(t)} \;\neq\; \wedge^{1}\g_{+}^{*}(t) \;.\]
By this, for $t\neq0$, we have $\dim_\bC H^{1,0}_{\bar\partial_{t}}\left(\left. \Gamma \middle\backslash G \right.\right) \;<\; \dim_\bC G$ and hence $(\left. \Gamma \middle\backslash G \right.,J_{t})$ is not holomorphically parallelizable.
\end{proof}

\section{Example: deformations of the Nakamura manifold}\label{naka}
 Consider the Lie group $G=\bC\ltimes_{\phi} \bC^{2}$ where \[\phi(z) \;=\; \left(
\begin{array}{cc}
\esp^{z}& 0  \\
0&    \esp^{-z}  
\end{array}
\right) \;. \]
Then there exists a lattice $\Gamma=(a\Z+2\pi\Z)\ltimes \Gamma^{\prime\prime}$ where $\Gamma^{\prime\prime}$ is a lattice in $\bC^{2}$.
The solvmanifold $X:=\left.\Gamma \middle\backslash G \right.$ is called \emph{(holomorphically parallelizable) Nakamura manifold}, \cite{nakamura}.

\medskip

In order to compute the Dolbeault, respectively Bott-Chern cohomologies of the Nakamura manifold, consider the sub-double-complexes $B^{\bullet,\bullet}_{\Gamma}$ and $C^{\bullet,\bullet}_{\Gamma}$ given in Table \ref{table:nak-b} and Table \ref{table:nak-c}, see \cite{kasuya-mathz, angella-kasuya-1}. (For the sake of simplicity, we shorten, e.g., $\de z_{2\bar3}:=\de z_{2}\wedge\de \bar z_3$, where $z_1$ is the holomorphic coordinate on $\bC$ and $\{z_2, z_3\}$ is the set of holomorphic coordinates on $\bC^2$.)

\begin{table}[!hpt]\label{table:nak-b}
\centering
{\resizebox{\textwidth}{!}{
\begin{tabular}{>{$\mathbf\bgroup}l<{\mathbf\egroup$} || >{$}l<{$}}
\toprule
 & B^{\bullet,\bullet}_{\Gamma}\\
\toprule
(0,0) & \bC \left\langle 1 \right\rangle \\
\midrule[0.02em]
(1,0) & \bC \left\langle \de z_{1},\; \esp^{-z_{1}}\de z_{2},\; \esp^{z_{1}}\de z_{3} \right\rangle  \\[5pt]
(0,1) & \bC \left\langle \de z_{\bar1},\; \esp^{-z_{1}}\de z_{\bar2},\; \esp^{z_{1}}\de z_{\bar3}\right\rangle \\
\midrule[0.02em]
(2,0) & \bC \left\langle \esp^{-z_{1}}\de z_{12},\; \esp^{z_{1}}\de z_{13},\; \de z_{23} \right\rangle \\[5pt]
(1,1) & \bC \left\langle \de z_{1\bar1},\; \esp^{-z_{1}}\de z_{1\bar2},\; \esp^{z_{1}}\de z_{1\bar3},\; \esp^{-z_{1}}\de z_{2\bar1},\; \esp^{-2z_{1}}\de z_{2\bar2},\; \de z_{2\bar3},\; \esp^{z_{1}}\de z_{3\bar1},\; \de z_{3\bar2},\; \esp^{2z_{1}}\de z_{3\bar3} \right\rangle \\[5pt]
(0,2) & \bC \left\langle \esp^{-z_{1}} \de z_{\bar1\bar2},\; \esp^{z_{1}} \de z_{\bar1\bar3},\; \de z_{\bar 2\bar3} \right\rangle \\
\midrule[0.02em]
(3,0) & \bC \left\langle \de z_{123} \right\rangle \\[5pt]
(2,1) & \bC \left\langle \esp^{-z_{1}}\de z_{12\bar1},\; \esp^{-2 z_{1}}\de z_{12\bar2},\; \de z_{12\bar3},\; \esp^{z_{1}}\de z_{13\bar1},\; \de z_{13\bar2},\; \esp^{2z_{1}}\de z_{13\bar3},\; \de z_{23\bar1},\; \esp^{-z_{1}}\de z_{23\bar2},\; \esp^{z_{1}}\de z_{23\bar3}\right\rangle \\[5pt]
(1,2) & \bC \left\langle \de z_{3\bar1\bar2},\;  \de z_{2\bar1\bar3},\;  \de z_{1\bar2\bar3},\; \esp^{ -z_{1}}\de z_{1\bar1\bar2},\; \esp^{z_{1}}\de z_{1\bar1\bar3},\; \esp^{-2 z_{1}}\de z_{2\bar1\bar2},\; \esp^{-z_{1}}\de z_{2\bar2\bar3},\; \esp^{2 z_{1}}\de z_{3\bar1\bar3},\; \esp^{ z_{1}}\de z_{3\bar2\bar3} \right\rangle \\[5pt]
(0,3) & \bC \left\langle \de z_{\bar1\bar2\bar3} \right\rangle \\
\midrule[0.02em]
(3,1) & \bC \left\langle \de z_{123\bar1},\; \esp^{-z_{1}}\de z_{123\bar2},\; \esp^{z_{1}}\de z_{123\bar3} \right\rangle \\[5pt]
(2,2) & \bC \left\langle \esp^{-2z_{1}}\de z_{12\bar1\bar2},\; \de z_{12\bar1\bar3},\; \esp^{-z_{1}}\de z_{12\bar2\bar3},\; \de z_{13\bar1\bar2},\; \esp^{2z_{1}}\de z_{13\bar1\bar3},\; \esp^{z_{1}}\de z_{13\bar2\bar3},\; \esp^{-z_{1}}\de z_{23\bar1\bar2},\; \esp^{z_{1}}\de z_{23\bar1\bar3},\; \de z_{23\bar2\bar3} \right\rangle \\[5pt]
(1,3) & \bC \left\langle \de z_{1\bar1\bar2\bar3},\; \esp^{- z_{1}}\de z_{2\bar1\bar2\bar3},\; \esp^{ z_{1}}\de z_{3\bar1\bar2\bar3} \right\rangle \\
\midrule[0.02em]
(3,2) & \bC \left\langle \esp^{-z_{1}}\de z_{123\bar1\bar2},\; \esp^{z_{1}}\de z_{123\bar1\bar3},\; \de z_{123\bar2\bar3} \right\rangle \\[5pt]
(2,3) & \bC \left\langle \esp^{-z_{1}}\de z_{12\bar1\bar2\bar3},\; \esp^{z_{1}}\de z_{13\bar1\bar2\bar3},\; \de z_{23\bar1\bar2\bar3} \right\rangle \\
\midrule[0.02em]
(3,3) & \bC \left\langle \de z_{123\bar1\bar2\bar3} \right\rangle \\
\bottomrule
\end{tabular}
}}
\caption{The double-complex $B^{\bullet,\bullet}_{\Gamma}$ for computing the Dolbeault cohomology of the holomorphically parallelizable Nakamura manifold $\left. \Gamma \middle\backslash G \right.$.}
\end{table}

\begin{table}[!hpt]\label{table:nak-c}
\centering
{\resizebox{\textwidth}{!}{
\begin{tabular}{>{$\mathbf\bgroup}l<{\mathbf\egroup$} || >{$}l<{$}}
\toprule
 & C^{\bullet,\bullet}_{\Gamma}\\
\toprule
(0,0) & \bC \left\langle 1 \right\rangle \\
\midrule[0.02em]
(1,0) & \bC \left\langle \de z_{1},\; \esp^{-z_{1}}\de z_{2},\; \esp^{z_{1}}\de z_{3},\; \esp^{-\bar z_{1}}\de z_{2},\; \esp^{\bar z_{1}}\de z_{3} \right\rangle  \\[5pt]
(0,1) & \bC \left\langle \de z_{\bar1},\; \esp^{-z_{1}}\de z_{\bar2},\; \esp^{z_{1}}\de z_{\bar3},\; \esp^{-\bar z_{1}}\de z_{\bar2},\; \esp^{\bar z_{1}}\de z_{\bar3} \right\rangle \\
\midrule[0.02em]
(2,0) & \bC \left\langle \esp^{-z_{1}}\de z_{12},\; \esp^{z_{1}}\de z_{13},\; \de z_{23},\; \esp^{-\bar z_{1}}\de z_{12},\; \esp^{\bar z_{1}}\de z_{13} \right\rangle \\[5pt]
(1,1) & \bC \left\langle \de z_{1\bar1},\; \esp^{-z_{1}}\de z_{1\bar2},\; \esp^{z_{1}}\de z_{1\bar3},\; \esp^{-z_{1}}\de z_{2\bar1},\; \esp^{-2z_{1}}\de z_{2\bar2},\; \de z_{2\bar3},\; \esp^{z_{1}}\de z_{3\bar1},\; \de z_{3\bar2},\; \esp^{2z_{1}}\de z_{3\bar3}, \right. \\[5pt]
& \left.\esp^{-\bar z_{1}}\de z_{2\bar1},\; \esp^{-\bar z_{1}}\de z_{1\bar2},\; \esp^{\bar z_{1}}\de z_{1\bar3},\; \esp^{\bar z_{1}}\de z_{3\bar1},\; \esp^{-2\bar z_{1}} \de z_{2\bar2},\; \esp^{2\bar z_{1}}\de z_{3\bar3} \right\rangle \\[5pt]
(0,2) & \bC \left\langle \esp^{-z_{1}} \de z_{\bar1\bar2},\; \esp^{z_{1}} \de z_{\bar1\bar3},\; \de z_{\bar 2\bar3},\; \esp^{-\bar z_{1}}\de z_{\bar1\bar2},\; \esp^{\bar z_{1}}\de z_{\bar1\bar3} \right\rangle \\
\midrule[0.02em]
(3,0) & \bC \left\langle \de z_{123} \right\rangle \\[5pt]
(2,1) & \bC \left\langle \esp^{-z_{1}}\de z_{12\bar1},\; \esp^{-2 z_{1}}\de z_{12\bar2},\; \de z_{12\bar3},\; \esp^{z_{1}}\de z_{13\bar1},\; \de z_{13\bar2},\; \esp^{2z_{1}}\de z_{13\bar3},\; \de z_{23\bar1},\; \esp^{-z_{1}}\de z_{23\bar2},\; \esp^{z_{1}}\de z_{23\bar3}, \right. \\[5pt]
& \left. \esp^{-\bar z_{1}}\de z_{12\bar1},\; \esp^{\bar z_{1}}\de z_{13\bar1},\; \esp^{-2\bar z_{1}}\de z_{12\bar2},\; \esp^{-\bar z_{1}}\de z_{23\bar2},\; \esp^{2\bar z_{1}}\de z_{13\bar3},\; \esp^{\bar z_{1}}\de z_{23\bar3} \right\rangle \\[5pt]
(1,2) & \bC \left\langle \esp^{-\bar z_{1}}\de z_{1\bar1\bar2},\; \esp^{-2\bar z_{1}}\de z_{2\bar1\bar2},\; \de z_{3\bar1\bar2},\; \esp^{\bar z_{1}}\de z_{1\bar1\bar3},\; \de z_{2\bar1\bar3},\; \esp^{2\bar z_{1}}\de z_{3\bar1\bar3},\; \de z_{1\bar2\bar3},\; \esp^{-\bar z_{1}}\de z_{2\bar2\bar3},\; \esp^{\bar z_{1}}\de z_{3\bar2\bar3}, \right. \\[5pt]
& \left. \esp^{ -z_{1}}\de z_{1\bar1\bar2},\; \esp^{z_{1}}\de z_{1\bar1\bar3},\; \esp^{-2 z_{1}}\de z_{2\bar1\bar2},\; \esp^{-z_{1}}\de z_{2\bar2\bar3},\; \esp^{2 z_{1}}\de z_{3\bar1\bar3},\; \esp^{ z_{1}}\de z_{3\bar2\bar3} \right\rangle \\[5pt]
(0,3) & \bC \left\langle \de z_{\bar1\bar2\bar3} \right\rangle \\
\midrule[0.02em]
(3,1) & \bC \left\langle \de z_{123\bar1},\; \esp^{-z_{1}}\de z_{123\bar2},\; \esp^{z_{1}}\de z_{123\bar3},\; \esp^{-\bar z_{1}}\de z_{123\bar2},\; \esp^{\bar z_{1}}\de z_{123\bar3} \right\rangle \\[5pt]
(2,2) & \bC \left\langle \esp^{-2z_{1}}\de z_{12\bar1\bar2},\; \de z_{12\bar1\bar3},\; \esp^{-z_{1}}\de z_{12\bar2\bar3},\; \de z_{13\bar1\bar2},\; \esp^{2z_{1}}\de z_{13\bar1\bar3},\; \esp^{z_{1}}\de z_{13\bar2\bar3},\; \esp^{-z_{1}}\de z_{23\bar1\bar2},\; \esp^{z_{1}}\de z_{23\bar1\bar3}, \right.\\[5pt]
& \left. \de z_{23\bar2\bar3}, \; \esp^{-2\bar z_{1}}\de z_{12\bar1\bar2},\; \esp^{-\bar z_{1}}\de z_{23\bar1\bar2},\; \esp^{-\bar z_{1}}\de z_{12\bar2\bar3},\; \esp^{\bar z_{1}}\de z_{13\bar2\bar3},\; \esp^{2\bar z_{1}}\de z_{13\bar1\bar3},\; \esp^{\bar z_{1}}\de z_{23\bar1\bar3} \right\rangle \\[5pt]
(1,3) & \bC \left\langle \de z_{1\bar1\bar2\bar3},\; \esp^{-\bar z_{1}}\de z_{2\bar1\bar2\bar3},\; \esp^{\bar z_{1}}\de z_{3\bar1\bar2\bar3},\; \esp^{- z_{1}}\de z_{2\bar1\bar2\bar3},\; \esp^{ z_{1}}\de z_{3\bar1\bar2\bar3} \right\rangle \\
\midrule[0.02em]
(3,2) & \bC \left\langle \esp^{-z_{1}}\de z_{123\bar1\bar2},\; \esp^{z_{1}}\de z_{123\bar1\bar3},\; \de z_{123\bar2\bar3},\; \esp^{-\bar z_{1}}\de z_{123\bar1\bar2},\; \esp^{\bar z_{1}}\de z_{123\bar1\bar3} \right\rangle \\[5pt]
(2,3) & \bC \left\langle \esp^{-z_{1}}\de z_{12\bar1\bar2\bar3},\; \esp^{z_{1}}\de z_{13\bar1\bar2\bar3},\; \de z_{23\bar1\bar2\bar3},\; 
\esp^{-\bar z_{1}}\de z_{12\bar1\bar2\bar3},\; \esp^{\bar z_{1}}\de z_{13\bar1\bar2\bar3} \right\rangle \\
\midrule[0.02em]
(3,3) & \bC \left\langle \de z_{123\bar1\bar2\bar3} \right\rangle \\
\bottomrule
\end{tabular}
}}
\caption{The double-complex $C^{\bullet,\bullet}_\Gamma$ for computing the Bott-Chern cohomology of the holomorphically parallelizable Nakamura manifold $\left. \Gamma \middle\backslash G \right.$.}
\end{table}

Then, by \cite[Corollary 6.2]{kasuya-holpar} and by \cite[Theorem 2.24]{angella-kasuya-1}, the inclusions $B^{\bullet,\bullet}_{\Gamma}\subset \wedge^{\bullet,\bullet}(X)$ and $C^{\bullet,\bullet}_{\Gamma}\subset \wedge^{\bullet,\bullet}(X)$
induce isomorphisms
\[ H^{\bullet,\bullet}_{\bar\partial}(B^{\bullet,\bullet}_{\Gamma})\cong H^{\bullet,\bullet}_{\bar\partial}(X) \qquad \text{ and }\qquad H^{\bullet,\bullet}_{BC}(C^{\bullet,\bullet}_{\Gamma})\cong H^{\bullet,\bullet}_{BC}(X) \;. \]

\medskip

We consider deformations $\{J_{t}\}_{t\in B}$ over a ball $B\subset \bC$ given by
\begin{enumerate}[{\itshape (1)}]
 \item $t\,\frac{\partial}{\partial z_{1}}\otimes \de \bar z_{1} \in H^{0,1}\left(X; T^{1,0}X\right)$, or
 \item $t\,\frac{\partial}{\partial z_{1}}\otimes \esp^{ z_{1}}\de \bar z_{3} \in H^{0,1}\left(X; T^{1,0}X\right)$.
\end{enumerate}

\medskip

As for deformations in case {\itshape (1)}, we can compute the Dolbeault and Bott-Chern cohomologies by applying Theorem \ref{thm:dolb-deformations} and Theorem \ref{thm:bc-deformations} to the complexes $B^{\bullet,\bullet}_\Gamma(t)$ and $C^{\bullet,\bullet}_\Gamma(t)$ in Table \ref{table:nak-def-b} and Table \ref{table:nak-def-c}, respectively, and by considering the $J_{t}$-Hermitian metrics $g_{t}:=\phi^{1,0}_{1}(t)\odot\phi^{0,1}_{1}(t)+\phi^{1,0}_{2}(t)\odot\varphi^{0,1}_{2}(t)+\phi^{1,0}_{3}(t)\odot\varphi^{0,1}_{3}(t)$; the generators of the complexes are defined starting from the forms in Table \ref{table:nak-def-1-c-gen}, and we summarize the results of the computations of the Dolbeault and Bott-Chern cohomologies in Table \ref{table:nak-def-1-delbar} and Table \ref{table:nak-def-1-bc}, respectively.

As for deformations in case {\itshape (2)}, we can compute the Dolbeault cohomology by applying Theorem \ref{thm:dolb-deformations} to the complex $B^{\bullet,\bullet}_\Gamma(t)$ in Table \ref{table:nak-def-b}, and by considering the $J_{t}$-Hermitian metrics $g_{t}:=\phi^{1,0}_{1}(t)\odot\phi^{0,1}_{1}(t)+\phi^{1,0}_{2}(t)\odot\varphi^{0,1}_{2}(t)+\phi^{1,0}_{3}(t)\odot\varphi^{0,1}_{3}(t)$; the generators of the complex are defined starting from the forms in Table \ref{table:nak-def-2-c-gen}, and we summarize the results of the computation of the Dolbeault cohomology in Table \ref{table:nak-def-2-delbar}. (As regards the Bott-Chern cohomology for deformations in case {\itshape (2)}, the vector space $C^{\bullet,\bullet}_\Gamma(t)$ does not provide a sub-double-complex for $t\neq0$, and, by modifying it in order to be closed for both $\del_t$ and $\delbar_t$, and $\bar*_{g_t}$, as required in Theorem \ref{thm:bc-deformations}, it seems that the finite-dimensionality is no more guaranteed.)

\begin{remark}{\rm
In \cite[Theorem 4]{Hd}, K. Hasegawa showed that deformations in case {\itshape (2)} are not left-invariant.
Hence our method is effective for computing the Dolbeault cohomology of non-left-invariant complex structures.
}\end{remark}

(As a matter of notations, we shorten, e.g., $\phi^{1,0}_1(t)\wedge\phi^{0,1}_{12}(t):=\phi^{1,0}_1(t)\wedge\phi^{0,1}_{1}(t)\wedge\phi^{0,1}_{2}(t)$.)

\begin{table}[!hpt]\label{table:nak-def-1-c-gen}
 \centering
\begin{tabular}{>{$}l<{$} || >{$}l<{$}}
\toprule
\text{\bfseries case {\itshape (1)}} & \\
\mathbf{\psi} & \mathbf{\de\psi} \\
\toprule
\phi^{1,0}_{1}(t):=\de z_{1}-t\de \bar z_{1} & \de \phi^{1,0}_{1}(t) = 0 \\[5pt]
\phi^{1,0}_{2}(t):=\esp^{- z_{1}}\de z_{2} & \de \phi^{1,0}_{2}(t) = -\frac{1}{1-\vert t\vert^{2}}\, \phi^{1,0}_{1}(t)\wedge \phi^{1,0}_{2}(t)+\frac{t}{1-\vert t\vert^{2}}\, \phi^{1,0}_{2}(t) \wedge \phi^{0,1}_{1}(t) \\[5pt]
\phi^{1,0}_{3}(t):=\esp^{ z_{1}}\de z_{3} & \de \phi^{1,0}_{3}(t) = \frac{1}{1-\vert t\vert^{2}}\,\phi^{1,0}_{1}(t)\wedge \phi^{1,0}_{3}(t)-\frac{t}{1-\vert t\vert^{2}}\,\phi^{1,0}_{3}(t) \wedge \phi^{0,1}_{1}(t) \\
\midrule[0.02em]
\varphi^{1,0}_{2}(t):=\esp^{-\bar z_{1}}\de z_{2} & \de \varphi^{1,0}_{2}(t) = -\frac{\bar t}{1-\vert t\vert^{2}}\,\phi^{1,0}_{1}(t) \wedge \varphi^{1,0}_{2}(t)+\frac{1}{1-\vert t\vert^{2}}\, \varphi^{1,0}_{2}(t) \wedge \phi^{0,1}_{1}(t) \\[5pt]
\varphi^{1,0}_{3}(t):=\esp^{\bar z_{1}}\de z_{3} & \de \varphi^{1,0}_{3}(t) = \frac{\bar t}{1-\vert t\vert^{2}}\,\phi^{1,0}_{1}(t)\wedge \varphi^{1,0}_{3}(t)-\frac{1}{1-\vert t\vert^{2}}\varphi^{1,0}_{3}(t)\wedge\phi^{0,1}_{1}(t) \\
\midrule[0.02em] \midrule[0.02em]
\phi^{0,1}_{1}(t):=\de\bar z_{1}-\bar t\de  z_{1} & \de \phi^{0,1}_{1}(t) = 0 \\[5pt]
\phi^{0,1}_{2}(t):=\esp^{- z_{1}}\de \bar z_{2} & \de \phi^{0,1}_{2}(t) = -\frac{1}{1-\vert t\vert^{2}}\phi^{1,0}_{1}(t)\wedge \phi^{0,1}_{2}(t)-\frac{t}{1-\vert t\vert^{2}}\,\phi^{0,1}_{1}(t)\wedge \phi^{0,1}_{2}(t) \\[5pt]
\phi^{0,1}_{3}(t):=\esp^{z_{1}}\de \bar z_{3} & \de \phi^{0,1}_{3}(t) = \frac{1}{1-\vert t\vert^{2}}\, \phi^{1,0}_{1}(t)\wedge \phi^{0,1}_{3}(t)+\frac{t}{1-\vert t\vert^{2}}\, \phi^{0,1}_{1}(t)\wedge \phi^{0,1}_{3}(t) \\
\midrule[0.02em]
\varphi^{0,1}_{2}(t):=\esp^{-\bar z_{1}}\de \bar z_{2} & \de \varphi^{0,1}_{2}(t) = -\frac{\bar t}{1-\vert t\vert^{2}}\, \phi^{1,0}_{1}(t)\wedge \varphi^{0,1}_{2}(t)-\frac{1}{1-\vert t\vert^{2}}\, \phi^{0,1}_{1}(t)\wedge \varphi^{0,1}_{2}(t) \\[5pt]
\varphi^{0,1}_{3}(t):=\esp^{\bar z_{1}}\de\bar z_{3} & \de \varphi^{0,1}_{3}(t) = \frac{\bar t}{1-\vert t\vert^{2}}\, \phi^{1,0}_{1}(t)\wedge \varphi^{0,1}_{3}(t)+\frac{1}{1-\vert t\vert^{2}}\, \phi^{0,1}_{1}(t)\wedge \varphi^{0,1}_{3}(t) \\
\bottomrule
\end{tabular}
\caption{Definitions for setting the generators of the complexes $B^{\bullet,\bullet}_\Gamma(t)$, see Table \ref{table:nak-def-b}, and $C^{\bullet,\bullet}_\Gamma(t)$, see Table \ref{table:nak-def-c}, for the deformations in case {\itshape (1)}, which are given by $t\,\frac{\partial}{\partial z_{1}}\otimes \de \bar z_{1}$, of the holomorphically parallelizable Nakamura manifold $\left. \Gamma \middle\backslash G \right.$.}
\end{table}

\begin{table}[!hpt]\label{table:nak-def-2-c-gen}
 \centering
\begin{tabular}{>{$}l<{$} || >{$}l<{$}}
\toprule
\text{\bfseries case {\itshape (2)}} & \\
\mathbf{\psi} & \mathbf{\de\psi} \\
\toprule
\phi^{1,0}_{1}(t):=\de z_{1}-t\esp^{ z_{1}}\de\bar z_{3} & \de \phi^{1,0}_{1}(t)=-t\, \phi^{1,0}_{1}(t)\wedge \phi^{0,1}_{3}(t) \\[5pt]
\phi^{1,0}_{2}(t):=\esp^{- z_{1}}\de z_{2} & \de \phi^{1,0}_{2}(t)=-\phi^{1,0}_{1}(t)\wedge \phi^{1,0}_{2}(t)+t\, \phi^{1,0}_{2}(t)\wedge \phi^{0,1}_{3}(t) \\[5pt]
\phi^{1,0}_{3}(t):=\esp^{ z_{1}}\de z_{3} & \de \phi^{1,0}_{3}(t)=\phi^{1,0}_{1}(t)\wedge \phi^{1,0}_{3}(t)-t\, \phi^{1,0}_{3}(t)\wedge \phi^{0,1}_{3}(t) \\
\midrule[0.02em]
\varphi^{1,0}_{2}(t):=\esp^{-\bar z_{1}}\de z_{2} & \de \varphi^{1,0}_{2}(t)=\bar t\, \varphi^{1,0}_{2}(t)\wedge \varphi^{1,0}_{3}(t)+\varphi^{1,0}_{2}(t)\wedge \phi^{0,1}_{1}(t) \\[5pt]
\varphi^{1,0}_{3}(t):=\esp^{\bar z_{1}}\de z_{3} & \de \varphi^{1,0}_{3}(t)=-\varphi^{1,0}_{3}(t)\wedge \phi^{0,1}_{1}(t) \\
\midrule[0.02em] \midrule[0.02em]
\phi^{0,1}_{1}(t):=\de\bar z_{1}-\bar t\esp^{\bar z_{1}}\de z_{3} & \de \phi^{0,1}_{1}(t)= \bar t\, \varphi^{1,0}_{3}(t)\wedge\phi^{0,1}_{1}(t) \\[5pt]
\phi^{0,1}_{2}(t):=\esp^{-z_{1}}\de\bar z_{2} & \de \phi^{0,1}_{2}(t)=-\phi^{1,0}_{1}(t)\wedge \phi^{0,1}_{2}(t)+t\, \phi^{0,1}_{2}(t)\wedge \phi^{0,1}_{3}(t) \\[5pt]
\phi^{0,1}_{3}(t):=\esp^{z_{1}}\de\bar z_{3} & \de \phi^{0,1}_{3}(t)=\phi^{1,0}_{1}(t)\wedge \phi^{0,1}_{3}(t) \\
\midrule[0.02em]
\varphi^{0,1}_{2}(t):=\esp^{-\bar z_{1}}\de \bar z_{2} & \de \varphi^{0,1}_{2}(t)=-\bar t\,\varphi^{1,0}_{3}(t)\wedge \varphi^{0,1}_{2}(t)-\phi^{0,1}_{1}(t)\wedge \varphi^{0,1}_{2}(t) \\[5pt]
\varphi^{0,1}_{3}(t):=\esp^{\bar z_{1}}\de\bar z_{3} & \de \varphi^{0,1}_{3}(t)=\bar t\,\varphi^{1,0}_{3}(t)\wedge \varphi^{0,1}_{3}(t)+\phi^{0,1}_{1}(t)\wedge \varphi^{0,1}_{3}(t) \\
\bottomrule
\end{tabular}
\caption{Definitions for setting the generators of the complex $B^{\bullet,\bullet}_\Gamma(t)$, see Table \ref{table:nak-def-b}, for the deformations in case {\itshape (2)}, which are given by $t\,\frac{\partial}{\partial z_{1}}\otimes \esp^{ z_{1}}\de \bar z_{3}$, of the holomorphically parallelizable Nakamura manifold $\left. \Gamma \middle\backslash G \right.$.}
\end{table}

\begin{table}[!hpt]\label{table:nak-def-b}
 \centering
\begin{tabular}{>{$\mathbf\bgroup}l<{\mathbf\egroup$} || >{$}l<{$}}
\toprule
 & B^{\bullet,\bullet}_{\Gamma}(t)\\
\toprule
(0,0) & \bC \left\langle 1 \right\rangle \\
\midrule[0.02em]
(1,0) & \bC \left\langle \phi^{1,0}_{1}(t),\; \phi^{1,0}_{2}(t),\; \phi^{1,0}_{3} (t)\right\rangle  \\[5pt]
(0,1) & \bC \left\langle \phi^{0,1}_{1}(t),\; \phi^{0,1}_{2}(t),\; \phi^{0,1}_{3}(t)\right\rangle \\
\midrule[0.02em]
(2,0) & \bC \left\langle \phi^{1,0}_{12}(t),\; \phi^{1,0}_{13}(t),\; \phi^{1,0}_{23}(t) \right\rangle \\[5pt]
(1,1) & \bC \left\langle \phi^{1,0}_{1}(t)\wedge\phi^{0,1}_{1}(t),\; \phi^{1,0}_{1}(t)\wedge\phi^{0,1}_{2}(t),\; \phi^{1,0}_{1}(t)\wedge\phi^{0,1}_{3}(t),\; \phi^{1,0}_{2}(t)\wedge\phi^{0,1}_{1}(t),\; \phi^{1,0}_{2}(t)\wedge\phi^{0,1}_{2}(t),\;\right.\\[5pt]
& \left. \phi^{1,0}_{2}(t)\wedge\phi^{0,1}_{3}(t),\; \phi^{1,0}_{3}(t)\wedge\phi^{0,1}_{1}(t),\; \phi^{1,0}_{3}(t)\wedge\phi^{0,1}_{2}(t),\; \phi^{1,0}_{3}(t)\wedge\phi^{0,1}_{3} (t)\right\rangle \\[5pt]
(0,2) & \bC \left\langle \phi^{0,1}_{12}(t),\; \phi^{0,1}_{13}(t),\; \phi^{0,1}_{ 23}(t) \right\rangle \\
\midrule[0.02em]
(3,0) & \bC \left\langle \phi^{1,0}_{123}(t) \right\rangle \\[5pt]
(2,1) & \bC \left\langle \phi^{1,0}_{12}(t)\wedge\phi^{0,1}_{1}(t),\; \phi^{1,0}_{12}(t)\wedge\phi^{0,1}_{2}(t),\; \phi^{1,0}_{12}(t)\wedge\phi^{0,1}_{3}(t),\; \phi^{1,0}_{13}(t)\wedge\phi^{0,1}_{1}(t),\; \phi^{1,0}_{13}(t)\wedge \phi^{0,1}_{2}(t),\; \right.\\[5pt]
& \left. \phi^{1,0}_{13}(t)\wedge\phi^{0,1}_{3}(t),\; \phi^{1,0}_{23}(t)\wedge\phi^{0,1}_{1}(t),\; \phi^{1,0}_{23}(t)\wedge \phi^{0,1}_{2}(t),\; \phi^{1,0}_{23}(t)\wedge\phi^{0,1}_{3}(t)\right\rangle \\[5pt]
(1,2) & \bC \left\langle \phi^{1,0}_{3}(t)\wedge\phi^{0,1}_{12}(t),\;  \phi^{1,0}_{2}(t)\wedge\phi^{0,1}_{13}(t),\;  \phi^{1,0}_{1}(t)\wedge\phi^{0,1}_{23}(t),\; \phi^{1,0}_{1}(t)\wedge\phi^{0,1}_{12}(t),\; \phi^{1,0}_{1}(t)\wedge\phi^{0,1}_{13}(t),\; 
\; \right.\\[5pt]
& \left. \phi^{1,0}_{2}(t)\wedge\phi^{0,1}_{12}(t),\; \phi^{1,0}_{2}(t)\wedge\phi^{0,1}_{23}(t),\; \phi^{1,0}_{3}(t)\wedge\phi^{0,1}_{13}(t),\; \phi^{1,0}_{3}(t)\wedge\phi^{0,1}_{23}(t) \right\rangle \\[5pt]
(0,3) & \bC \left\langle \phi^{0,1}_{123} (t)\right\rangle \\
\midrule[0.02em]
(3,1) & \bC \left\langle \phi^{1,0}_{123}(t)\wedge\phi^{0,1}_{1}(t),\; \phi^{1,0}_{123}(t)\wedge\phi^{0,1}_{2}(t),\; \phi^{1,0}_{123}(t)\wedge\phi^{0,1}_{3}(t) \right\rangle \\[5pt]
(2,2) & \bC \left\langle \phi^{1,0}_{12}(t)\wedge\phi^{0,1}_{12}(t),\; \phi^{1,0}_{12}(t)\wedge\phi^{0,1}_{13}(t),\; \phi^{1,0}_{12}(t)\wedge\phi^{0,1}_{23}(t),\; \phi^{1,0}_{13}(t)\wedge\phi^{0,1}_{12}(t),\; \phi^{1,0}_{13}(t)\wedge\phi^{0,1}_{13}(t),\;  \right.\\[5pt]
& \left.
\phi^{1,0}_{13}(t)\wedge\phi^{0,1}_{23}(t) ,\; \phi^{1,0}_{23}(t)\wedge\phi^{0,1}_{12}(t),\; \phi^{1,0}_{23}(t)\wedge\phi^{0,1}_{13}(t), \phi^{1,0}_{23}(t)\wedge\phi^{0,1}_{23}(t) \right\rangle \\[5pt]
(1,3) & \bC \left\langle \phi^{1,0}_{1}(t)\wedge\phi^{0,1}_{123}(t),\; \phi^{1,0}_{2}(t)\wedge\phi^{0,1}_{123}(t),\;\phi^{1,0}_{3}(t)\wedge\phi^{0,1}_{123}(t) \right\rangle \\
\midrule[0.02em]
(3,2) & \bC \left\langle \phi^{1,0}_{123}(t)\wedge\phi^{0,1}_{12}(t),\; \phi^{1,0}_{123}(t)\wedge\phi^{0,1}_{13}(t),\; \phi^{1,0}_{123}(t)\wedge\phi^{0,1}_{23}(t) \right\rangle \\[5pt]
(2,3) & \bC \left\langle \phi^{1,0}_{12}(t)\wedge \phi^{0,1}_{123}(t),\; \phi^{1,0}_{13}(t)\wedge \phi^{0,1}_{123}(t),\; \phi^{1,0}_{23}(t)\wedge \phi^{0,1}_{123}(t) \right\rangle \\
\midrule[0.02em]
(3,3) & \bC \left\langle\phi^{1,0}_{123}(t)\wedge \phi^{0,1}_{123}(t) \right\rangle \\
\bottomrule
\end{tabular}
\caption{The double-complex $B^{\bullet,\bullet}_{\Gamma}(t)$ for computing the Dolbeault cohomology of the small deformations in case {\itshape (1)} and in case {\itshape (2)} of the holomorphically parallelizable Nakamura manifold $\left. \Gamma \middle\backslash G \right.$.}
\end{table}

\begin{table}[!hpt]\label{table:nak-def-c}
 \centering
{\resizebox{\textwidth}{!}{
\begin{tabular}{>{$\mathbf\bgroup}l<{\mathbf\egroup$} || >{$}l<{$}}
\toprule
 & C^{\bullet,\bullet}_{\Gamma}(t)\\
\toprule
(0,0) & \bC \left\langle 1 \right\rangle \\
\midrule[0.02em]
(1,0) & \bC \left\langle \phi^{1,0}_{1}(t),\; \phi^{1,0}_{2}(t),\; \phi^{1,0}_{3} (t),\; \varphi^{1,0}_{2}(t),\; \varphi^{1,0}_{3} (t)\right\rangle  \\[5pt]
(0,1) & \bC \left\langle \phi^{0,1}_{1}(t),\; \phi^{0,1}_{2}(t),\; \phi^{0,1}_{3}(t),\; \varphi^{0,1}_{2}(t),\; \varphi^{0,1}_{3}(t)\right\rangle \\
\midrule[0.02em]
(2,0) & \bC \left\langle \phi^{1,0}_{12}(t),\; \phi^{1,0}_{13}(t),\; \phi^{1,0}_{23}(t),\; \phi^{1,0}_{1}(t)\wedge\varphi^{1,0}_{2}(t),\; \phi^{1,0}_{1}(t)\wedge\varphi^{1,0}_{3}(t) \right\rangle \\[5pt]
(1,1) & \bC \left\langle \phi^{1,0}_{1}(t)\wedge\phi^{0,1}_{1}(t),\; \phi^{1,0}_{1}(t)\wedge\phi^{0,1}_{2}(t),\; \phi^{1,0}_{1}(t)\wedge\phi^{0,1}_{3}(t),\; \phi^{1,0}_{2}(t)\wedge\phi^{0,1}_{1}(t),\; \phi^{1,0}_{2}(t)\wedge\phi^{0,1}_{2}(t),\;\right.\\[5pt]
& \left. \phi^{1,0}_{2}(t)\wedge\phi^{0,1}_{3}(t),\; \phi^{1,0}_{3}(t)\wedge\phi^{0,1}_{1}(t),\; \phi^{1,0}_{3}(t)\wedge\phi^{0,1}_{2}(t),\; \phi^{1,0}_{3}(t)\wedge\phi^{0,1}_{3}(t),\; \phi_{1}^{1,0}(t)\wedge\varphi^{0,1}_{2}(t),\;
\right. \\[5pt]
& \left. \phi_{1}^{1,0}(t)\wedge\varphi^{0,1}_{3}(t),\; \varphi_{2}^{1,0}(t)\wedge\phi^{0,1}_{1}(t),\; \varphi_{2}^{1,0}(t)\wedge\varphi^{0,1}_{2}(t),\; \varphi_{3}^{1,0}(t)\wedge\phi^{0,1}_{1}(t), \varphi_{3}^{1,0}(t)\wedge\varphi^{0,1}_{3}(t)
\right\rangle \\[5pt]
(0,2) & \bC \left\langle \phi^{0,1}_{12}(t),\; \phi^{0,1}_{13}(t),\; \phi^{0,1}_{ 23}(t),\;  \phi^{0,1}_{1}(t)\wedge\varphi^{0,1}_{2}(t),\; \phi^{0,1}_{1}(t)\wedge\varphi^{0,1}_{3}(t) \right\rangle \\
\midrule[0.02em]
(3,0) & \bC \left\langle \phi^{1,0}_{123}(t) \right\rangle \\[5pt]
(2,1) & \bC \left\langle \phi^{1,0}_{12}(t)\wedge\phi^{0,1}_{1}(t),\; \phi^{1,0}_{12}(t)\wedge\phi^{0,1}_{2}(t),\; \phi^{1,0}_{12}(t)\wedge\phi^{0,1}_{3}(t),\; \phi^{1,0}_{13}(t)\wedge\phi^{0,1}_{1}(t),\; \phi^{1,0}_{13}(t)\wedge \phi^{0,1}_{2}(t),\; \phi^{1,0}_{13}(t)\wedge\phi^{0,1}_{3}(t),\; \right.\\[5pt]
& \left. \phi^{1,0}_{23}(t)\wedge\phi^{0,1}_{1}(t),\; \phi^{1,0}_{23}(t)\wedge \phi^{0,1}_{2}(t),\; \phi^{1,0}_{23}(t)\wedge\phi^{0,1}_{3}(t),\; \phi^{1,0}_{1}(t)\wedge\varphi^{1,0}_{2}(t)\wedge\phi^{0,1}_{1}(t),\; \phi^{1,0}_{1}(t)\wedge\varphi^{1,0}_{2}(t)\wedge\varphi^{0,1}_{2}(t),\; \right. \\[5pt]
& \left. \phi^{1,0}_{1}(t)\wedge\varphi^{1,0}_{3}(t)\wedge\phi^{0,1}_{1}(t),\; \phi^{1,0}_{1}(t)\wedge\varphi^{1,0}_{3}(t)\wedge\varphi^{0,1}_{3}(t),\; \phi^{1,0}_{23}(t)\wedge\varphi^{0,1}_{2}(t),\; \phi^{1,0}_{23}(t)\wedge\varphi^{0,1}_{3}(t)
\right\rangle \\[5pt]
(1,2) & \bC \left\langle
\phi^{1,0}_{1}(t)\wedge\phi^{0,1}_{1}(t)\wedge\varphi^{0,1}_{2}(t),\; \varphi^{1,0}_{2}(t)\wedge\phi^{0,1}_{1}(t)\wedge\varphi^{0,1}_{2}(t),\; \phi_{3}^{1,0}(t)\wedge\phi^{0,1}_{12}(t),\; \phi^{1,0}_{1}(t)\wedge\phi^{0,1}_{1}(t)\wedge\varphi^{0,1}_{3}(t),\;\right. \\[5pt]
& \left.  \phi_{2}^{1,0}(t)\wedge\phi^{0,1}_{13}(t),\;  \varphi^{1,0}_{3}(t)\wedge\phi^{0,1}_{1}(t)\wedge\varphi^{0,1}_{3}(t),\;\phi^{1,0}_{1}(t)\wedge\phi^{0,1}_{23}(t),\;
\varphi^{1,0}_{2}(t)\wedge\varphi^{0,1}_{23}(t),\;\varphi^{1,0}_{3}(t)\wedge\varphi^{0,1}_{23}(t), \right. \\[5pt]
& \left. \phi^{1,0}_{1}(t)\wedge\phi^{0,1}_{12}(t),\; \phi^{1,0}_{2}(t)\wedge\phi^{0,1}_{12}(t),\; \phi^{1,0}_{1}(t)\wedge\phi^{0,1}_{13}(t),\; \phi^{1,0}_{3}(t)\wedge\phi^{0,1}_{13}(t),\; \phi^{1,0}_{2}(t)\wedge\phi^{0,1}_{23}(t),\; \phi^{1,0}_{3}(t)\wedge\phi^{0,1}_{23}(t) \right\rangle \\[5pt]
(0,3) & \bC \left\langle \phi^{0,1}_{123} (t)\right\rangle \\
\midrule[0.02em]
(3,1) & \bC \left\langle \phi^{1,0}_{123}(t)\wedge\phi^{0,1}_{1}(t),\; \phi^{1,0}_{123}(t)\wedge\phi^{0,1}_{2}(t),\; \phi^{1,0}_{123}(t)\wedge\phi^{0,1}_{3}(t),\;  \phi^{1,0}_{123}(t)\wedge\varphi^{0,1}_{2}(t),\; \phi^{1,0}_{123}(t)\wedge\varphi^{0,1}_{3} (t) \right\rangle \\[5pt]
(2,2) & \bC \left\langle \phi^{1,0}_{12}(t)\wedge\phi^{0,1}_{12}(t),\; \phi^{1,0}_{12}(t)\wedge\phi^{0,1}_{13}(t),\; \phi^{1,0}_{12}(t)\wedge\phi^{0,1}_{23}(t),\; \phi^{1,0}_{13}(t)\wedge\phi^{0,1}_{12}(t),\; \phi^{1,0}_{13}(t)\wedge\phi^{0,1}_{13}(t),\;  \right.\\[5pt]
& \left.
\phi^{1,0}_{13}(t)\wedge\phi^{0,1}_{23}(t) ,\; \phi^{1,0}_{23}(t)\wedge\phi^{0,1}_{12}(t),\; \phi^{1,0}_{23}(t)\wedge\phi^{0,1}_{13}(t),\;  \phi^{1,0}_{23}(t)\wedge\phi^{0,1}_{23}(t), \; \phi^{1,0}_{1}(t)\wedge \varphi^{1,0}_{2}(t)\wedge \phi_{1}^{0,1}(t)\wedge \varphi^{0,1}_{2}(t),\;\right.\\[5pt]
& \left. \phi_{1}^{1,0}(t)\wedge\varphi_{2}^{1,0}(t)\wedge\phi^{0,1}_{23}(t),\; \phi^{1,0}_{1}(t)\wedge \varphi^{1,0}_{3}(t)\wedge \phi_{1}^{0,1}(t)\wedge \varphi^{0,1}_{3}(t),\; \phi_{1}^{1,0}(t)\wedge \varphi_{3}^{1,0}(t)\wedge\phi^{0,1}_{23}(t), \right.\\[5pt]
& \left. \phi^{1,0}_{23}(t)\wedge\phi_{1}^{0,1}(t)\wedge\varphi_{2}^{0,1}(t),\; \phi^{1,0}_{23}(t)\wedge\phi_{1}^{0,1}(t)\wedge\varphi_{3}^{0,1}(t) \right\rangle \\[5pt]
(1,3) & \bC \left\langle \phi^{1,0}_{1}(t)\wedge\phi^{0,1}_{123}(t),\; \phi^{1,0}_{2}(t)\wedge\phi^{0,1}_{123}(t),\;\phi^{1,0}_{3}(t)\wedge\phi^{0,1}_{123}(t),\; \varphi^{1,0}_{2}(t)\wedge\phi^{0,1}_{123}(t),\;\varphi^{1,0}_{3}(t)\wedge\phi^{0,1}_{123}(t) \right\rangle \\
\midrule[0.02em]
(3,2) & \bC \left\langle \phi^{1,0}_{123}(t)\wedge\phi^{0,1}_{12}(t),\; \phi^{1,0}_{123}(t)\wedge\phi^{0,1}_{13}(t),\; \phi^{1,0}_{123}(t)\wedge\phi^{0,1}_{23}(t),\; 
\phi^{1,0}_{123}(t)\wedge\phi^{0,1}_{1}(t)\wedge\varphi^{0,1}_{2}(t),\; \phi^{1,0}_{123}(t)\wedge\phi^{0,1}_{1}(t)\wedge\varphi^{0,1}_{3}(t)  \right\rangle \\[5pt]
(2,3) & \bC \left\langle \phi^{1,0}_{12}(t)\wedge \phi^{0,1}_{123}(t),\; \phi^{1,0}_{13}(t)\wedge \phi^{0,1}_{123}(t),\; \phi^{1,0}_{23}(t)\wedge \phi^{0,1}_{123}(t),\; 
\phi^{1,0}_{1}(t)\wedge\varphi^{1,0}_{2}(t)\wedge \phi^{0,1}_{123}(t),\; \phi^{1,0}_{1}(t)\wedge \varphi^{1,0}_{3}(t)\wedge \phi^{0,1}_{123}(t)
 \right\rangle \\
\midrule[0.02em]
(3,3) & \bC \left\langle\phi^{1,0}_{123}(t)\wedge \phi^{0,1}_{123}(t) \right\rangle \\
\bottomrule
\end{tabular}
}}
\caption{The double-complex $C^{\bullet,\bullet}_\Gamma(t)$ for computing the Bott-Chern cohomology of the small deformations in case {\itshape (1)} of the holomorphically parallelizable Nakamura manifold $\left. \Gamma \middle\backslash G \right.$.}
\end{table}

\begin{table}[!hpt]\label{table:nak-def-1-delbar}
 \centering
\begin{tabular}{>{$\mathbf\bgroup}l<{\mathbf\egroup$} || >{$}l<{$}}
\toprule
\text{\bfseries case {\itshape (1)}} & H^{\bullet,\bullet} _{\bar\partial_{t}}(X) \\
\toprule
(0,0) & \bC \left\langle 1 \right\rangle \\
\midrule[0.02em]
(1,0) & \bC \left\langle \phi^{1,0}_{1}(t) \right\rangle  \\[5pt]
(0,1) & \bC \left\langle \phi^{0,1}_{1}(t) \right\rangle \\
\midrule[0.02em]
(2,0) & \bC \left\langle  \phi^{1,0}_{23}(t) \right\rangle \\[5pt]
(1,1) & \bC \left\langle \phi^{1,0}_{1}(t)\wedge\phi^{0,1}_{1}(t),\;  \phi^{1,0}_{2}(t)\wedge\phi^{0,1}_{3}(t),\;  \phi^{1,0}_{3}(t)\wedge\phi^{0,1}_{2}(t) \right\rangle \\[5pt]
(0,2) & \bC \left\langle  \phi^{0,1}_{23}(t) \right\rangle \\
\midrule[0.02em]
(3,0) & \bC \left\langle \phi^{1,0}_{123}(t) \right\rangle \\[5pt]
(2,1) & \bC \left\langle   \phi^{1,0}_{12}(t)\wedge\phi^{0,1}_{3}(t),\; \phi^{1,0}_{13}(t)\wedge \phi^{0,1}_{2}(t),\; \phi^{1,0}_{23}(t)\wedge\phi^{0,1}_{1}(t)\right\rangle \\[5pt]
(1,2) & \bC \left\langle \phi^{1,0}_{3}(t)\wedge\phi^{0,1}_{12}(t),\;  \phi^{1,0}_{2}(t)\wedge\phi^{0,1}_{13}(t),\;  \phi^{1,0}_{1}(t)\wedge\phi^{0,1}_{23}(t) \right\rangle \\[5pt]
(0,3) & \bC \left\langle \phi^{0,1}_{123} (t)\right\rangle \\
\midrule[0.02em]
(3,1) & \bC \left\langle \phi^{1,0}_{123}(t)\wedge\phi^{0,1}_{1} \right\rangle \\[5pt]
(2,2) & \bC \left\langle \phi^{1,0}_{12}(t)\wedge\phi^{0,1}_{13}(t),\;  \phi^{1,0}_{13}(t)\wedge\phi^{0,1}_{12}(t),\; 
 \phi^{1,0}_{23}(t)\wedge\phi^{0,1}_{23}(t) \right\rangle \\[5pt]
(1,3) & \bC \left\langle \phi^{1,0}_{1}(t)\wedge\phi^{0,1}_{123}(t) \right\rangle \\
\midrule[0.02em]
(3,2) & \bC \left\langle  \phi^{1,0}_{123}(t)\wedge\phi^{0,1}_{23}(t) \right\rangle \\[5pt]
(2,3) & \bC \left\langle\phi^{1,0}_{23}(t)\wedge \phi^{0,1}_{123}(t) 
 \right\rangle \\
\midrule[0.02em]
(3,3) & \bC \left\langle\phi^{1,0}_{123}(t)\wedge \phi^{0,1}_{123}(t) \right\rangle \\
\bottomrule
\end{tabular}
\caption{The harmonic representatives of the Dolbeault cohomology of the small deformations in case {\itshape (1)}, which are given by $t\,\frac{\partial}{\partial z_{1}}\otimes \de \bar z_{1}$, of the holomorphically parallelizable Nakamura manifold, with respect to the Hermitian metric $g_{t}:=\phi^{1,0}_{1}(t)\odot\phi^{0,1}_{1}(t)+\phi^{1,0}_{2}(t)\odot\varphi^{0,1}_{2}(t)+\phi^{1,0}_{3}(t)\odot\varphi^{0,1}_{3}(t)$.}
\end{table}

\begin{table}[!hpt]\label{table:nak-def-1-bc}
 \centering
\begin{tabular}{>{$\mathbf\bgroup}l<{\mathbf\egroup$} || >{$}l<{$}}
\toprule
\text{\bfseries case {\itshape (1)}} & H^{\bullet,\bullet}_{{BC}_{J_t}}(X) \\
\toprule
(0,0) & \bC \left\langle 1 \right\rangle \\
\midrule[0.02em]
(1,0) & \bC \left\langle \phi^{1,0}_{1}(t) \right\rangle  \\[5pt]
(0,1) & \bC \left\langle \phi^{0,1}_{1}(t) \right\rangle \\
\midrule[0.02em]
(2,0) & \bC \left\langle  \phi^{1,0}_{23}(t) \right\rangle \\[5pt]
(1,1) & \bC \left\langle \phi^{1,0}_{1}(t)\wedge\phi^{0,1}_{1}(t),\;  \phi^{1,0}_{2}(t)\wedge\phi^{0,1}_{3}(t),\;  \phi^{1,0}_{3}(t)\wedge\phi^{0,1}_{2}(t) \right\rangle \\[5pt]
(0,2) & \bC \left\langle  \phi^{0,1}_{ 23}(t) \right\rangle \\
\midrule[0.02em]
(3,0) & \bC \left\langle \phi^{1,0}_{123}(t) \right\rangle \\[5pt]
(2,1) & \bC \left\langle   \phi^{1,0}_{12}(t)\wedge\phi^{0,1}_{3}(t),\; \phi^{1,0}_{13}(t)\wedge \phi^{0,1}_{2}(t),\; \phi^{1,0}_{23}(t)\wedge\phi^{0,1}_{1}(t)\right\rangle \\[5pt]
(1,2) & \bC \left\langle \phi^{1,0}_{3}(t)\wedge\phi^{0,1}_{12}(t),\;  \phi^{1,0}_{2}(t)\wedge\phi^{0,1}_{13}(t),\;  \phi^{1,0}_{1}(t)\wedge\phi^{0,1}_{23}(t) \right\rangle \\[5pt]
(0,3) & \bC \left\langle \phi^{0,1}_{123} (t)\right\rangle \\
\midrule[0.02em]
(3,1) & \bC \left\langle \phi^{1,0}_{123}(t)\wedge\phi^{0,1}_{1}(t) \right\rangle \\[5pt]
(2,2) & \bC \left\langle \phi^{1,0}_{12}(t)\wedge\phi^{0,1}_{13}(t),\;  \phi^{1,0}_{13}(t)\wedge\phi^{0,1}_{12}(t),\; 
 \phi^{1,0}_{23}(t)\wedge\phi^{0,1}_{23}(t) \right\rangle \\[5pt]
(1,3) & \bC \left\langle \phi^{1,0}_{1}(t)\wedge\phi^{0,1}_{123}(t) \right\rangle \\
\midrule[0.02em]
(3,2) & \bC \left\langle  \phi^{1,0}_{123}(t)\wedge\phi^{0,1}_{23}(t) \right\rangle \\[5pt]
(2,3) & \bC \left\langle\phi^{1,0}_{23}(t)\wedge \phi^{0,1}_{123}(t) \right\rangle \\
\midrule[0.02em]
(3,3) & \bC \left\langle\phi^{1,0}_{123}(t)\wedge \phi^{0,1}_{123}(t) \right\rangle \\
\bottomrule
\end{tabular}
\caption{The harmonic representatives of the Bott-Chern cohomology of the small deformations in case {\itshape (1)}, which are given by $t\,\frac{\partial}{\partial z_{1}}\otimes \de \bar z_{1}$, of the holomorphically parallelizable Nakamura manifold, with respect to the Hermitian metric $g_{t}:=\phi^{1,0}_{1}(t)\odot\phi^{0,1}_{1}(t)+\phi^{1,0}_{2}(t)\odot\varphi^{0,1}_{2}(t)+\phi^{1,0}_{3}(t)\odot\varphi^{0,1}_{3}(t)$.}
\end{table}

\begin{table}[!hpt]\label{table:nak-def-2-delbar}
 \centering
\begin{tabular}{>{$\mathbf\bgroup}l<{\mathbf\egroup$} || >{$}l<{$}}
\toprule
\text{\bfseries case {\itshape (2)}} & H^{\bullet,\bullet} _{\bar\partial_{t}}(X)\\
\toprule
(0,0) & \bC \left\langle 1 \right\rangle \\
\midrule[0.02em]
(1,0) & 0 \\[5pt]
(0,1) & \bC \left\langle \phi^{0,1}_{1}(t),\; \phi^{0,1}_{3}(t)\right\rangle \\
\midrule[0.02em]
(2,0) & \bC \left\langle \phi^{1,0}_{12}(t),\; \phi^{1,0}_{23}(t) \right\rangle \\[5pt]
(1,1) & \bC \left\langle  \phi^{1,0}_{1}(t)\wedge\phi^{0,1}_{2}(t),\; \phi^{1,0}_{3}(t)\wedge\phi^{0,1}_{2}(t) \right\rangle \\[5pt]
(0,2) & \bC \left\langle  \phi^{0,1}_{13}(t) \right\rangle \\
\midrule[0.02em]
(3,0) & 0 \\[5pt]
(2,1) & \bC \left\langle \phi^{1,0}_{12}(t)\wedge\phi^{0,1}_{1}(t),\; \phi^{1,0}_{12}(t)\wedge\phi^{0,1}_{3}(t),\; \phi^{1,0}_{23}(t)\wedge\phi^{0,1}_{1}(t),\;  \phi^{1,0}_{23}(t)\wedge\phi^{0,1}_{3}(t) \right\rangle \\[5pt]
(1,2) & \bC \left\langle \phi^{1,0}_{3}(t)\wedge\phi^{0,1}_{12}(t),\;  \phi^{1,0}_{1}(t)\wedge\phi^{0,1}_{23}(t),\; \phi^{1,0}_{1}(t)\wedge\phi^{0,1}_{12}(t),\; \phi^{1,0}_{3}(t)\wedge\phi^{0,1}_{23}(t) \right\rangle \\[5pt]
(0,3) & 0 \\
\midrule[0.02em]
(3,1) & \bC \left\langle \phi^{1,0}_{123}(t)\wedge\phi^{0,1}_{2} \right\rangle \\[5pt]
(2,2) & \bC \left\langle \phi^{1,0}_{12}(t)\wedge\phi^{0,1}_{13}(t),\;  \phi^{1,0}_{23}(t)\wedge\phi^{0,1}_{13}(t)   \right\rangle \\[5pt]
(1,3) & \bC \left\langle \phi^{1,0}_{1}(t)\wedge\phi^{0,1}_{123}(t),\; \phi^{1,0}_{3}(t)\wedge\phi^{0,1}_{123}(t) \right\rangle \\
\midrule[0.02em]
(3,2) & \bC \left\langle  \phi^{1,0}_{123}(t)\wedge\phi^{0,1}_{12}(t),\; \phi^{1,0}_{123}(t)\wedge\phi^{0,1}_{23}(t)  \right\rangle \\[5pt]
(2,3) & 0 \\
\midrule[0.02em]
(3,3) & \bC \left\langle\phi^{1,0}_{123}(t)\wedge \phi^{0,1}_{123}(t) \right\rangle \\
\bottomrule
\end{tabular}
\caption{The harmonic representatives of the Dolbeault cohomology of the small deformations in case {\itshape (2)}, which are given by $t\,\frac{\partial}{\partial z_{1}}\otimes \esp^{z_1}\de \bar z_{3}$, of the holomorphically parallelizable Nakamura manifold, with respect to the Hermitian metric $g_{t}:=\phi^{1,0}_{1}(t)\odot\phi^{0,1}_{1}(t)+\phi^{1,0}_{2}(t)\odot\varphi^{0,1}_{2}(t)+\phi^{1,0}_{3}(t)\odot\varphi^{0,1}_{3}(t)$.}
\end{table}

\begin{table}[!hpt]\label{table:nak-cohomologies}
 \centering
\begin{tabular}{>{$\mathbf\bgroup}c<{\mathbf\egroup$} || >{$}c<{$} | >{$}c<{$} >{$}c<{$} | >{$}c<{$} | >{$}c<{$} >{$}c<{$} | >{$}c<{$} | >{$}c<{$}}
\toprule
\textnormal{$\dim_\bC H_{\sharp}^{\bullet,\bullet}$} & \multicolumn{3}{c|}{\text{\bfseries Nakamura}} & \multicolumn{3}{c|}{\text{\bfseries case {\itshape (1)}}} & \multicolumn{2}{c}{\text{\bfseries case {\itshape (2)}}} \\
& dR & \bar\partial & BC & dR & \bar\partial & BC & dR & \bar\partial \\
\toprule
(0,0) & 1 & 1 & 1 & 1 &  1 & 1 & 1& 1 \\
\midrule[0.02em]
(1,0) & \multirow{2}{*}{2} & 3 & 1 & \multirow{2}{*}{2} & 1 & 1 &\multirow{2}{*}{2}& 0 \\[5pt]
(0,1) & & 3 & 1 & & 1 &  1 && 2 \\
\midrule[0.02em]
(2,0) & \multirow{3}{*}{5} & 3 & 3 & \multirow{3}{*}{5} & 1 & 1 & \multirow{3}{*}{5} & 2 \\[5pt]
(1,1) & & 9 & 7 & & 3 &  3 & &2 \\[5pt]
(0,2) & & 3 & 3 & & 1 &  1 & &1 \\
\midrule[0.02em]
(3,0) & \multirow{4}{*}{8} & 1 & 1 & \multirow{4}{*}{8} & 1 & 1 & \multirow{4}{*}{8} & 0 \\[5pt]
(2,1) & & 9 & 9 & & 3 &  3 & & 4 \\[5pt]
(1,2) & & 9 & 9 & & 3 &  3 & & 4 \\[5pt]
(0,3) & & 1 & 1 & & 1 &  1 & & 0 \\
\midrule[0.02em]
(3,1) & \multirow{3}{*}{5} & 3 & 3 & \multirow{3}{*}{5} & 1 & 1 & \multirow{3}{*}{5} & 1 \\[5pt]
(2,2) & & 9 & 11 & & 3 & 3 & & 2 \\[5pt]
(1,3) & & 3 & 3 & & 1 &  1 & &2 \\
\midrule[0.02em]
(3,2) & \multirow{2}{*}{2} & 3 & 5 & \multirow{2}{*}{2} & 1 & 1 & \multirow{2}{*}{2}& 2 \\[5pt]
(2,3) & & 3 & 5 & & 1 & 1 & & 0 \\
\midrule[0.02em]
(3,3) & 1 & 1 & 1 & 1 & 1 & 1&1 & 1 \\
\bottomrule
\end{tabular}
\caption{Summary of the dimensions of the cohomologies of the holomorphically parallelizable Nakamura manifold $X$, \cite[Example 2.25]{angella-kasuya-1}, and of its small deformations in case {\itshape (1)} and {\itshape (2)}, given, respectively, by $t\,\frac{\partial}{\partial z_{1}}\otimes \de \bar z_{1}$ and by $t\,\frac{\partial}{\partial z_{1}}\otimes \esp^{z_1}\de \bar z_{3}$.}
\end{table}

\medskip

Straightforwardly, (e.g., from Table \ref{table:nak-cohomologies} and by \cite[Theorem B]{angella-tomassini-3},) we get the following result. (See \cite{kasuya-hodge} for other examples of non-K\"ahler solvmanifolds satisfying the $\partial\overline{\partial}$-Lemma.)

\begin{proposition}
Consider the holomorphically parallelizable Nakamura manifold $\left( X,\, J_0\right)$, and its small deformations $\left\{J_t\right\}_{t\in B}$ as in case {\itshape (1)} or case {\itshape (2)}. Then
\begin{enumerate}[{\itshape (i)}]
\item the deformations $\left(X,\, J_{t}\right)$ as in case {\itshape (1)} satisfy the $\partial\bar\partial$-Lemma.
\item the deformations $\left(X,\, J_{t}\right)$ as in case {\itshape (2)} satisfy the $E_{1}$-degeneration of the Hodge and Fr\"olicher spectral sequences, but do not satisfy the $\partial\bar\partial$-Lemma.
\end{enumerate}
\end{proposition}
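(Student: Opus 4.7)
The plan is to read both claims off the cohomological data that has already been assembled in Table \ref{table:nak-cohomologies}, using two standard criteria. First, the Fr\"olicher spectral sequence degenerates at $E_{1}$ if and only if for every $k$ one has
\[
\sum_{p+q=k}\dim_{\C} H^{p,q}_{\bar\partial_{t}}(X) \;=\; b_{k}(X)\;,
\]
and second, by \cite[Theorem B]{angella-tomassini-3}, a compact complex manifold satisfies the $\partial\bar\partial$-Lemma if and only if $E_{1}$-degeneration holds together with the bidegree-wise equalities $\dim_{\C} H^{p,q}_{BC}(X)=\dim_{\C} H^{p,q}_{\bar\partial}(X)$ for all $(p,q)$.

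For part \emph{(i)}, I would inspect the columns of Table \ref{table:nak-cohomologies} corresponding to case \emph{(1)}. Summing the Dolbeault numbers on anti-diagonals one recovers the Betti numbers $1,2,5,8,5,2,1$ of the de Rham cohomology, which gives $E_{1}$-degeneration for $J_{t}$. Moreover the Bott-Chern and Dolbeault numbers in that case coincide in every bidegree. By the criterion recalled above the deformed manifold $(X,J_{t})$ satisfies the $\partial\bar\partial$-Lemma.

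For part \emph{(ii)}, the same column-sum check on the case \emph{(2)} portion of Table \ref{table:nak-cohomologies} again yields the de Rham Betti numbers $1,2,5,8,5,2,1$, so Fr\"olicher still degenerates at $E_{1}$. The failure of the $\partial\bar\partial$-Lemma is then immediate from the fact that Hodge symmetry $h^{p,q}=h^{q,p}$ is a necessary consequence of the Lemma: for $t\neq 0$ in case \emph{(2)} one reads $\dim_{\C} H^{1,0}_{\bar\partial_{t}}(X)=0$ while $\dim_{\C} H^{0,1}_{\bar\partial_{t}}(X)=2$, so already the symmetry is violated. (Alternatively one could compare the Dolbeault sums with the Bott-Chern dimensions, but the asymmetry argument is sharper and bypasses the fact that we only computed Bott-Chern in case \emph{(1)}.)

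The main obstacle is therefore not the deduction itself, which is essentially bookkeeping, but the correctness of Table \ref{table:nak-cohomologies}: one must be sure that the explicit harmonic representatives displayed in Tables \ref{table:nak-def-1-delbar}, \ref{table:nak-def-1-bc}, \ref{table:nak-def-2-delbar} really do span the cohomologies of the deformed manifold. This is where Theorem \ref{thm:dolb-deformations} and Theorem \ref{thm:bc-deformations}, applied to the subcomplexes $B^{\bullet,\bullet}_{\Gamma}(t)$ and $C^{\bullet,\bullet}_{\Gamma}(t)$ built from the generators in Tables \ref{table:nak-def-1-c-gen} and \ref{table:nak-def-2-c-gen}, do the work: one needs the inclusions to induce isomorphisms in cohomology for small $t$, which requires checking hypothesis \emph{(4)} (the compatibility of the Hermitian metric $g_{t}$ with the Hodge-$*$) and smooth dependence on $t$. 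Once this is granted, the proposition follows by the table-level argument above.
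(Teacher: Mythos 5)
Your proposal is correct and follows the same route as the paper, which simply reads the conclusion off Table \ref{table:nak-cohomologies} together with the numerical characterization of the $\partial\bar\partial$-Lemma from \cite[Theorem B]{angella-tomassini-3} (your restatement as ``$E_1$-degeneration plus $h^{p,q}_{BC}=h^{p,q}_{\bar\partial}$ in every bidegree'' is not the literal statement of that theorem, which asserts $\sum_{p+q=k}(h^{p,q}_{BC}+h^{p,q}_{A})=2b_k$, but it implies it via the duality $h^{p,q}_{A}=h^{n-p,n-q}_{BC}$, so the deduction goes through). Your use of the failure of Hodge symmetry $h^{1,0}_{\bar\partial_t}=0\neq 2=h^{0,1}_{\bar\partial_t}$ in case \emph{(2)} is exactly the right workaround for the absence of Bott--Chern data there, and you correctly locate the actual content of the argument in the validity of the tables via Theorems \ref{thm:dolb-deformations} and \ref{thm:bc-deformations}.
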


\section{Example: Sawai and Yamada generalized manifolds}\label{SYE}
In this section, we study the cohomology of the generalized examples introduced and studied by H. Sawai and T. Yamada in \cite{sawai-yamada} in order to generalize Ch. Benson and C.~S. Gordon manifold \cite{benson-gordon}.

\medskip

Following \cite{sawai-yamada}, let $\mathfrak{n}$ be a complex nilpotent Lie algebra.
We assume that
$$ \mathfrak{n} \;=\; \bC\left\langle Y_{1},\dots ,Y_{\ell},Y_{\ell+1}, \dots ,Y_{m} \right\rangle $$
so that $[\mathfrak{n},\mathfrak{n}]=\bC\langle Y_{\ell+1}, \dots,Y_{m}\rangle$ and $[Y_{i},Y_{j}]=C_{ij}^{k}\,Y_{k}$ for some $C_{ij}^{k}\in \Z$, varying $i,j,k\in\{1,\ldots,m\}$.
Define
\[\tilde{\mathfrak{n}} \;:=\; \bC\langle Y_{1,1},\dots ,Y_{1,\ell}, Y_{1,\ell+1} ,Y_{1,m}\rangle\oplus \bC\langle Y_{2,1},\dots ,Y_{2,\ell},Y_{2,\ell+1} ,Y_{2,m}\rangle\]
where $\bC\langle Y_{1,1},\dots ,Y_{1,\ell}, Y_{1,\ell+1},\dots ,Y_{1,m}\rangle\cong \bC\langle Y_{2,1},\dots ,Y_{2,\ell}, Y_{2,\ell+1},\dots ,Y_{2,m}\rangle\cong \mathfrak{n}$.
Consider the semi-direct product $\g:=\bC\langle X\rangle\ltimes \tilde{\mathfrak{n}}$ given by 
\[ \left[X, Y_{1,j}\right] \;:=\; k_{j}\, Y_{1,j}\;, \qquad \left[X, Y_{2,j}\right] \;:=\; -k_{j}\, Y_{2,j} \]
where $\left\{k_{j}\right\}_j\subset \N\setminus\{0\}$ is such that the Jacobi identity holds.

Let $G=\bC\ltimes \tilde N$ be the connected simply-connected complex Lie group corresponding to $\g$.
Then we have
\[ G \;=\; \left\{\left(z, \left(
\begin{array}{cc}
w_{1,1}\\
w_{2,1}  
\end{array}
\right) ,\dots , \left(
\begin{array}{cc}
w_{1,m}\\
w_{2,m}  
\end{array}
\right)\right)
\;:\;
z, w_{1,j}, w_{2,j} \in \bC\right\}
\]
with the product
\begin{eqnarray*}
\lefteqn{
\left(z, \left(
\begin{array}{cc}
w_{1,1}\\
w_{2,1}  
\end{array}
\right) ,\dots , \left(
\begin{array}{cc}
w_{1,m}\\
w_{2,m}  
\end{array}
\right)\right)\cdot \left(z^{\prime}, \left(
\begin{array}{cc}
w^{\prime}_{1,1}\\
w^{\prime}_{2,1}  
\end{array}
\right) ,\dots , \left(
\begin{array}{cc}
w^{\prime}_{1,m}\\
w^{\prime}_{2,m}  
\end{array}
\right)\right)
}\\
&=&
\left( z+z^{\prime}, \left(
\begin{array}{cc}
f_{1,1}(z,w_{1,1},\ldots,w_{1,m},w_{1,1}^{\prime},\ldots,w_{1,m}^{\prime})\\
f_{2,1}(z,w_{2,1},\ldots,w_{2,m},w_{2,1}^{\prime},\ldots,w_{2,m}^{\prime})
\end{array}
\right), \dots ,\right.\\[5pt]
&& \left. \left(
\begin{array}{cc}
f_{1,m}(z,w_{1,1},\ldots,w_{1,m},w_{1,1}^{\prime},\ldots,w_{1,m}^{\prime})\\
f_{2,m}(z,w_{2,1},\ldots,w_{2,m},w_{2,1}^{\prime},\ldots,w_{2,m}^{\prime})
\end{array}
\right) \right) \;,
\end{eqnarray*}
for certain functions $f_{1,1},\ldots,f_{1,m},f_{2,1},\ldots,f_{2,m}$, see \cite[Section 2]{sawai-yamada}.

Take a unimodular matrix $B\in \mathrm{SL}(2,\Z)$ with distinct positive eigenvalues $\lambda$ and $\lambda^{-1}$, and set $a:=\log \lambda$.
Consider
\begin{eqnarray*}
\Gamma &:=& \left\{\left(a\,s+2\pi\sqrt{-1}\,t, \left(
\begin{array}{cc}
w_{1,1}+\lambda\, w_{2,1} \\
w_{1,1}  +\lambda^{-1}\, w_{2,1} 
\end{array}
\right) ,\dots , \left(
\begin{array}{cc}
w_{1,m}+\lambda\, w_{2,m} \\
w_{1,m}  +\lambda^{-1}\, w_{2,m} 
\end{array}
\right)\right) \right. \\[5pt]
&& \left. \;:\; s,t\in \Z, \; w_{1,j},w_{2,j}\in \Z+\sqrt{-1}\,\Z\right\}\;.
\end{eqnarray*}
Then, as H. Sawai and T. Yamada proved in \cite[Theorem 2.1]{sawai-yamada}, $\Gamma$ is a lattice in $G$.
Hence we have $\Gamma=(a\,\Z+2\pi\sqrt{-1}\,\Z)\ltimes \Gamma^{\prime\prime}$ such that $\Gamma^{\prime\prime}$ is a lattice in $\tilde N$.

\medskip

Let $\left\{ y_{1,1},\dots ,y_{1,\ell}, y_{1,\ell+1},\dots ,y_{1,m}, y_{2,1},\dots ,y_{2,\ell}, y_{2,\ell+1},\dots ,y_{2,m} \right\}$ be the dual basis of the space $\left(\tilde{\mathfrak{n}}^{1,0}\right)^*$ of the left-invariant $(1,0)$-forms on $\tilde N$.
Then, by the assumption, we have $\de y_{1,j}=\de y_{2,j}=0$ for $1\le j\le \ell$.
The space $\left(\g^{1,0}\right)^*$ of the left-invariant $(1,0)$-forms on $G$ is given by
\[\left(\g^{1,0}\right)^* \;=\; \bC\left\langle \de z, \esp^{-k_{1}z}y_{1,1}, \dots ,\esp^{-k_{m}z}y_{1,m}, \esp^{k_{1}z}y_{2,1},\dots ,\esp^{k_{m}z}y_{2,m}\right\rangle \;.
\]
Consider
\begin{eqnarray*}
B_{\Gamma}^{\bullet,\bullet}
&:=& \wedge^{\bullet,\bullet} \bC\left\langle \de z, \esp^{-k_{1}z}y_{1,1}, \dots ,\esp^{-k_{m}z}y_{1,m}, \esp^{k_{1}z}y_{2,1},\dots ,\esp^{k_{m}z}y_{2,m}\right\rangle\\[5pt]
&&\otimes \,\bC\left\langle \de\bar z, \esp^{-k_{1}z}\bar y_{1,1}, \dots ,\esp^{-k_{m}z}\bar y_{1,m}, \esp^{k_{1}z}\bar y_{2,1},\dots ,\esp^{k_{m}z}\bar y_{2,m}\right\rangle \;.
\end{eqnarray*}
Then we have
\[H^{\bullet,\bullet}_{\bar\partial}(B_{\Gamma}^{\bullet,\bullet})\cong H^{\bullet,\bullet}_{\bar\partial}(\left. \Gamma \middle\backslash G \right.).
\]
We consider deformations $\{J_{t}\}_{t\in B}$ over a ball $B\subset \bC$ given by: 
$$ t\,\frac{\partial}{\partial z}\otimes \esp^{k_{1}z}\bar y_{2,1} \;\in\; H^{0,1}(\left. \Gamma \middle\backslash G \right.; T^{1,0}\left. \Gamma \middle\backslash G \right.) \;. $$

To compute the Dolbeault cohomology of $\left(\left. \Gamma \middle\backslash G \right., J_t\right)$, consider the forms defined in Table \ref{table:sawai-yamada-b-gen}.

\begin{table}[!hpt]\label{table:sawai-yamada-b-gen}
 \centering
\begin{tabular}{>{$}l<{$} || >{$}l<{$}}
\toprule
\mathbf{\psi} & \mathbf{\de\psi} \\
\toprule
\phi^{1,0}_{0}(t):=\de z-t\,\esp^{k_{1}z}\bar y_{2,1} & \de\phi^{1,0}_{0}(t) = -t\,k_{1} \phi^{1,0}_{0}(t)\wedge \phi^{0,1}_{2,1}(t) \\[5pt]
\phi^{1,0}_{1,j}(t):=\esp^{-k_{j}z}y_{1,j} & \de\phi^{1,0}_{1,j}(t) = -k_{j}\phi_{0}^{1,0}(t)\wedge \phi_{1,j}^{1,0}(t)+t\,k_{j} \phi^{1,0}_{1,j}(t)\wedge \phi^{0,1}_{2,1}(t)+\esp^{-k_{j} z}\de y_{1,j} \\[5pt]
\phi^{1,0}_{2,j}(t):=\esp^{k_{j}z}y_{2,j} & \de\phi^{1,0}_{2,j}(t) = k_{j}\phi_{0}^{1,0}(t)\wedge \phi_{2,j}^{1,0}(t)-t\,k_{j} \phi^{1,0}_{2,j}(t)\wedge \phi^{0,1}_{2,1}(t)+\esp^{k_{j} z}\de y_{2,j} \\
\midrule[0.02em]
\varphi^{1,0}_{1,j}(t):=e^{-k_{j}\bar z}y_{1,j} & \de\varphi^{1,0}_{1,j}(t) = -k_{j}\phi_{0}^{0,1}(t)\wedge \varphi_{1,j}^{1,0}(t)+\bar t\,k_{j} \varphi^{1,0}_{1,j}(t)\wedge \varphi^{1,0}_{2,1}(t)+ \esp^{-k_{j}\bar z}\de y_{1,j} \\[5pt]
\varphi^{1,0}_{2,j}(t):=e^{k_{j}\bar z}y_{2,j} & \de\varphi^{1,0}_{2,j}(t) = k_{j}\phi_{0}^{0,1}(t)\wedge \varphi_{2,j}^{1,0}(t)-\bar t\,k_{j} \varphi^{1,0}_{2,j}(t)\wedge \varphi^{1,0}_{2,1}(t)+\esp^{k_{j}\bar z}\de y_{2,j} \\
\midrule[0.02em] \midrule[0.02em]
\phi^{0,1}_{0}(t):=\de\bar z-\bar t\,\esp^{k_{1}\bar z} y_{2,1} & \de\phi^{0,1}_{0}(t) = -t\,k_{1} \phi^{0,1}_{0}(t)\wedge \varphi^{1,0}_{2,1}(t) \\[5pt]
\phi^{0,1}_{1,j}(t):=\esp^{-k_{j}z}\bar y_{1,j} & \de\phi^{0,1}_{1,j}(t) = -k_{j}\phi_{0}^{1,0}(t)\wedge \phi_{1,j}^{0,1}(t)+t\,k_{j} \phi^{0,1}_{1,j}(t)\wedge \phi^{0,1}_{2,1}(t)+ \esp^{-k_{j} z}\de\bar y_{1,j} \\[5pt]
\phi^{0,1}_{2,j}(t):=\esp^{k_{j}z}\bar y_{2,j} & \de\phi^{0,1}_{2,j}(t) = k_{j}\phi_{0}^{1,0}(t)\wedge \phi_{2,j}^{0,1}(t)-t\,k_{j} \phi^{0,1}_{2,j}(t)\wedge \phi^{0,1}_{2,1}(t)+\esp^{k_{j}z}\de\bar y_{2,j} \\
\midrule[0.02em]
\varphi^{0,1}_{1,j}(t):=\esp^{-k_{j}\bar z}\bar y_{1,j} & \de\varphi^{0,1}_{1,j}(t) = -k_{j}\phi_{0}^{0,1}(t)\wedge \varphi_{1,j}^{0,1}(t)+\bar t\,k_{j} \varphi^{0,1}_{1,j}(t)\wedge \varphi^{1,0}_{2,1}(t)+\esp^{-k_{j}\bar z}\de\bar y_{1,j} \\[5pt]
\varphi^{0,1}_{2,j}(t):=\esp^{k_{j}\bar z}\bar y_{2,j} & \de\varphi^{0,1}_{2,j}(t) = k_{j}\phi_{0}^{0,1}(t)\wedge \varphi_{2,j}^{0,1}(t)-\bar t\,k_{j} \varphi^{0,1}_{2,j}(t)\wedge \phi^{1,0}_{2,1}(t)+\esp^{k_{j}\bar z}\de\bar y_{2,j} \\
\bottomrule
\end{tabular}
\caption{Definitions for setting the generators of the complex $B^{\bullet,\bullet}_\Gamma(t)$, see \eqref{eq:sawai-yamada-def-b}, for the deformations induced by $t\,\frac{\partial}{\partial z}\otimes \esp^{k_{1}z}\bar y_{2,1}$, of the Sawai and Yamada generalized manifold $\left. \Gamma \middle\backslash G \right.$.}
\end{table}

More precisely, by applying Theorem \ref{thm:dolb-deformations} to the double-complex
\begin{eqnarray}\label{eq:sawai-yamada-def-b}
B_{\Gamma}^{\bullet,\bullet}(t)
&=& \wedge^{\bullet,\bullet} \bC\left\langle \phi^{1,0}_{0}(t), \phi^{1,0}_{1,1}(t), \dots ,\phi^{1,0}_{1,m}(t), \phi^{1,0}_{2,1}(t),\dots ,\phi^{1,0}_{2,m}(t)\right\rangle\\[5pt]
&& \otimes \, \bC\left\langle \phi^{0,1}_{0}(t),\phi^{0,1}_{1,1}(t), \dots ,\phi^{0,1}_{1,m}(t), \phi^{0,1}_{2,1}(t),\dots ,\phi^{0,1}_{2,m}(t)\right\rangle \nonumber
\end{eqnarray}
and to the $J_t$-Hermitian metric
\[ g_{t} \;:=\; \phi^{1,0}_{0}(t)\odot\phi^{0,1}_{0}(t)+\sum_{j=1}^{m} \phi^{1,0}_{1,j}(t)\odot \varphi^{0,1}_{1,j}(t)+\sum_{j=1}^{m} \phi^{1,0}_{2,j}(t)\odot \varphi^{0,1}_{2,j}(t) \;,
\]
since $(B_{\Gamma}^{\bullet,\bullet}(t), \bar\partial_{t})$ is a sub-complex of $(\wedge^{\bullet,\bullet}(\left. \Gamma \middle\backslash G \right.), \bar\partial_{t})$ and 
$\bar*_{t}(B_{\Gamma}^{\bullet,\bullet}(t))\subseteq B_{\Gamma}^{2m+1-\bullet,2m+1-\bullet}(t)$, then we have
\[ H^{\bullet,\bullet}_{\bar\partial_{t}}(B_{\Gamma}^{\bullet,\bullet}(t)) \;\cong\; H^{\bullet,\bullet}_{\bar\partial_{t}}(\left. \Gamma \middle\backslash G \right.) \;.\]
By simple computations we have the following result.

\begin{proposition}\label{prop:sawai-yamada}
Consider the Sawai and Yamada generalized manifold $X = \left. \Gamma \middle\backslash G \right.$ of complex dimension $2m+1$, and its small deformations $\left\{J_t\right\}_{t\in B \subset \bC}$ induced by $t\,\frac{\partial}{\partial z}\otimes \esp^{k_{1}z}\bar y_{2,1} \in H^{0,1}(X; T^{1,0}X)$. Then
\[ \dim H^{1,0}_{\bar\partial_{t}}(X)\;=\; 0 \qquad \text{ and } \qquad \dim H^{2m+1,0}_{\bar\partial_{t}}(X)\;=\;0 \;.\]
\end{proposition}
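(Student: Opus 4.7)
The plan is to reduce both computations to finite-dimensional linear algebra on the sub-double-complex $B^{\bullet,\bullet}_\Gamma(t)$. Since Theorem \ref{thm:dolb-deformations} has just been applied to this complex, we have the isomorphism $H^{\bullet,\bullet}_{\bar\partial_t}(B^{\bullet,\bullet}_\Gamma(t)) \cong H^{\bullet,\bullet}_{\bar\partial_t}(\solvmfd)$ for small $t$, so it suffices to show that $\bar\partial_t$ restricted to $B^{1,0}_\Gamma(t)$ is injective and that the one-dimensional top-degree space $B^{2m+1,0}_\Gamma(t)$ contains no $\bar\partial_t$-closed element, for $t\neq 0$.

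For the $(1,0)$-piece, I would note that $B^{1,-1}_\Gamma(t)=0$ (so the cohomology is just the kernel of $\bar\partial_t$) and that $B^{1,0}_\Gamma(t)$ has basis $\{\phi^{1,0}_0(t),\, \phi^{1,0}_{1,j}(t),\, \phi^{1,0}_{2,j}(t)\}_{j=1,\ldots,m}$. Reading off the $(1,1)$-component of $d$ from Table \ref{table:sawai-yamada-b-gen} gives
\[ \bar\partial_t \phi^{1,0}_0(t) \;=\; -tk_1\,\phi^{1,0}_0(t)\wedge\phi^{0,1}_{2,1}(t),\quad \bar\partial_t \phi^{1,0}_{i,j}(t) \;=\; (-1)^{i+1}\,tk_j\,\phi^{1,0}_{i,j}(t)\wedge\phi^{0,1}_{2,1}(t) \]
for $i\in\{1,2\}$. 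For $t\neq 0$ and $k_j\in\N\setminus\{0\}$, these $2m+1$ images are distinct basis vectors of $B^{1,1}_\Gamma(t)$ (all sharing the right factor $\phi^{0,1}_{2,1}(t)$ but with pairwise distinct left factors), hence linearly independent, so $\bar\partial_t\colon B^{1,0}_\Gamma(t)\to B^{1,1}_\Gamma(t)$ is injective and $H^{1,0}_{\bar\partial_t}(X)=0$.

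For the top-degree piece, the space $B^{2m+1,0}_\Gamma(t)$ is generated by the single form $\Phi:=\phi^{1,0}_0(t)\wedge\bigwedge_{j=1}^m \phi^{1,0}_{1,j}(t)\wedge\bigwedge_{j=1}^m \phi^{1,0}_{2,j}(t)$. Since every $(1,0)$-generator $\psi$ satisfies $\bar\partial_t \psi = c_\psi\,\psi\wedge\phi^{0,1}_{2,1}(t)$ with the scalar $c_\psi$ just computed, the Leibniz rule together with the standard bookkeeping (the sign $(-1)^{k-1}$ in the Leibniz expansion cancels with the sign $(-1)^{N-k}$ coming from moving $\phi^{0,1}_{2,1}(t)$ past the remaining $N-k$ one-forms, for a total $(-1)^{N-1}=1$) yields
\[ \bar\partial_t \Phi \;=\; \Bigl(-tk_1 + \sum_{j=1}^m tk_j - \sum_{j=1}^m tk_j\Bigr)\,\Phi\wedge\phi^{0,1}_{2,1}(t) \;=\; -tk_1\,\Phi\wedge\phi^{0,1}_{2,1}(t), \]
which is nonzero for $t\neq 0$, so $H^{2m+1,0}_{\bar\partial_t}(X)=0$.

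The main (mild) obstacle is to verify that the remainder terms $\esp^{\mp k_j z}\de y_{i,j}$ appearing in Table \ref{table:sawai-yamada-b-gen} contribute only to the $(2,0)$-part of $d\phi^{1,0}_{i,j}(t)$, so that the $(1,1)$-components used above really are complete. This follows from two observations: the basis forms $y_{i,j}$ remain of type $(1,0)$ for $J_t$, because the Kodaira-Spencer class $t\partial_z\otimes \esp^{k_1 z}\bar y_{2,1}$ perturbs only $\de z$ among the chosen basis of $\left(\g^{1,0}\right)^*$; and the derived algebra of $\tilde{\mathfrak{n}}$ stays inside each summand $\C\langle Y_{i,*}\rangle$ separately, so $\de y_{i,j}$ is a combination of $y_{i,k}\wedge y_{i,l}$, purely of type $(2,0)$ in the deformed basis. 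This is the key point that forces the cancellation $\sum_j tk_j-\sum_j tk_j=0$ and leaves the single non-cancelling scalar $-tk_1$ responsible for the non-closedness of $\Phi$.
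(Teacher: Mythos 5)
Your proposal is correct and follows exactly the route the paper intends: the paper reduces to the finite-dimensional complex $B^{\bullet,\bullet}_\Gamma(t)$ via Theorem \ref{thm:dolb-deformations} and then states the result follows ``by simple computations,'' which are precisely the kernel computations you carry out (injectivity of $\bar\partial_t$ on $B^{1,0}_\Gamma(t)$ from the diagonal action on the $2m+1$ generators, and the non-vanishing of $\bar\partial_t\Phi=-tk_1\,\Phi\wedge\phi^{0,1}_{2,1}(t)$ after the cancellation $\sum_j tk_j-\sum_j tk_j=0$). Your verification that the remainder terms $\esp^{\mp k_jz}\de y_{i,j}$ are of pure type $(2,0)$ for $J_t$ is a correct and worthwhile filling-in of the one detail the paper leaves implicit.
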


\begin{remark}{\rm
 In \cite{kasuya-mathz, kasuya-holpar, console-fino-kasuya}, structures of holomorphic fibre bundles over complex tori with nilmanifold-fibres play a very important role for computing the Dolbeault cohomology of certain solvmanifolds. But, by Proposition \ref{prop:sawai-yamada}, such deformed complex solvmanifolds are not holomorphic fibre bundles over complex tori. Hence they provide new examples of ``Dolbeault-cohomologically-computable'' complex solvmanifolds.
}\end{remark}

\section{Closedness and openness under holomorphic deformation}

We recall that a property $\mathcal{P}$ concerning complex manifolds is called \emph{open} under holomorphic deformations if, whenever it holds for a compact complex manifold $X$, it holds also for any small deformations of $X$. It is called \emph{(Zariski-)closed} (simply, \emph{closed}) if, for any family $\left\{X_t\right\}_{t\in\Delta}$ of compact complex manifolds such that $\mathcal{P}$ holds for any $t\in\Delta\setminus\{0\}$ in the punctured-disk, then $\mathcal{P}$ holds also for $X_0$.

It is known that the $\partial\bar\partial$-Lemma is open under holomorphic deformations, see, e.g., \cite[Proposition 9.21]{voisin}, or \cite[Theorem 5.12]{wu}, or \cite[\S B]{tomasiello}, or \cite[Corollary 2.7]{angella-tomassini-3}.
In \cite[Theorem 2.20]{angella-kasuya-1}, the authors proved that the $\partial\bar\partial$-Lemma is not \emph{strongly-closed} under holomorphic deformations, namely, there exists a family $\left\{X_t\right\}_{t\in \Delta}$ of compact complex manifolds and a sequence $\left\{t_k\right\}_{k\in\N}\subset \Delta$ converging to $0\in \Delta$ in the Euclidean topology of $\Delta$ such that $X_{t_k}$ satisfies the $\partial\bar\partial$-Lemma and $X_0$ does not; more precisely, in \cite[Example 2.17]{angella-kasuya-1}, $X_0$ is the completely-solvable Nakamura manifold.

We prove now that the $\partial\bar\partial$-Lemma is also non-(Zariski-)closed. Indeed, consider the holomorphically parallelizable Nakamura manifold $\left. \Gamma \middle\backslash G \right.$ and its small deformations as in Section \ref{naka}.
While $\left. \Gamma \middle\backslash G \right.$ does not satisfy the $E_{1}$-degeneration of the Hodge and Fr\"olicher spectral sequences, deformations as in case {\itshape (1)} and {\itshape (2)} do. While $\left. \Gamma \middle\backslash G \right.$ does not satisfy the $\partial\bar\partial$-Lemma, deformations as in case {\itshape (1)} do. Hence we get the following result.

\begin{corollary}\label{cor:non-closedness}
The properties of $E_{1}$-degeneration of the Hodge and Fr\"olicher spectral sequences and the $\partial\bar\partial$-Lemma are not closed under holomorphic deformations.
\end{corollary}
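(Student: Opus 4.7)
The plan is to exhibit the non-closedness by means of a single explicit family, namely the deformations of the holomorphically parallelizable Nakamura manifold $X = \left. \Gamma \middle\backslash G \right.$ constructed in Section \ref{naka}. The central fibre $X_0$ is not supposed to satisfy the relevant property, whereas the nearby fibres $X_t$ with $t\neq0$ are supposed to; this is exactly the failure of closedness in the Zariski sense described in the introduction.

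First I would read off the numerical data from Table \ref{table:nak-cohomologies}, which has already been computed by applying Theorem \ref{thm:dolb-deformations} and Theorem \ref{thm:bc-deformations} to the finite-dimensional sub-complexes $B^{\bullet,\bullet}_\Gamma(t)$ and $C^{\bullet,\bullet}_\Gamma(t)$ of Tables \ref{table:nak-def-b} and \ref{table:nak-def-c}. For the $E_1$-degeneration of the Hodge and Fr\"olicher spectral sequence, I would use the standard characterization
\[
\sum_{p+q=k}\dim_\C H^{p,q}_{\bar\partial_t}(X) \;=\; \dim_\C H^k_{dR}(X;\C) \qquad \text{for every } k,
\]
and simply check that the equality holds for $t\neq 0$ in case \emph{(2)} (where, e.g., $\dim H^{1,0}_{\bar\partial_t}+\dim H^{0,1}_{\bar\partial_t}=0+2=2=b_1(X)$) but fails at $t=0$ (where $\dim H^{1,0}_{\bar\partial_0}+\dim H^{0,1}_{\bar\partial_0}=3+3=6>2=b_1(X)$). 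This already produces a family whose general member has $E_1$-degenerating spectral sequence while the special member does not, hence the property is not closed.

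For the $\partial\bar\partial$-Lemma claim I would invoke the characterization of \cite[Theorem B]{angella-tomassini-3}, which expresses the $\partial\bar\partial$-property as a numerical identity between Bott-Chern, Dolbeault, and de Rham dimensions. Using the three right-most blocks of Table \ref{table:nak-cohomologies} corresponding to deformations in case \emph{(1)}, I would check that $\dim H^{p,q}_{BC,J_t}=\dim H^{p,q}_{\bar\partial_t}$ and that both assemble into $\dim H^{p+q}_{dR}$ for $t\neq 0$, whereas at $t=0$ the Bott-Chern dimensions are strictly larger (e.g.\ $\dim H^{2,2}_{BC}=11>9$). This yields a family in which $X_t$ satisfies the $\partial\bar\partial$-Lemma for every $t\neq 0$ but $X_0$ does not, proving the corollary.

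The hard part has in fact already been carried out in Section \ref{naka}: the delicate point is not the numerical comparison but rather the explicit verification that the sub-complexes $B^{\bullet,\bullet}_\Gamma(t)$ and $C^{\bullet,\bullet}_\Gamma(t)$ of the deformed double-complex satisfy the four hypotheses of Theorem \ref{thm:dolb-deformations} and Theorem \ref{thm:bc-deformations} (closedness under $\bar\partial_t$ and, respectively, under both $\partial_t,\bar\partial_t$, the Hodge-$*_{g_t}$ compatibility, smoothness, and cohomological sufficiency at $t=0$). Once those reductions to finite-dimensional computations are in place, the corollary is merely a bookkeeping exercise on Table \ref{table:nak-cohomologies}, and no further analytic input is needed.
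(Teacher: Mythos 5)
Your proposal is correct and follows essentially the same route as the paper: the authors also deduce the corollary from the deformations of the holomorphically parallelizable Nakamura manifold in Section \ref{naka}, reading off Table \ref{table:nak-cohomologies} and invoking \cite[Theorem B]{angella-tomassini-3} to see that the central fibre satisfies neither property while the nearby fibres (case \emph{(1)} for the $\partial\bar\partial$-Lemma, cases \emph{(1)} and \emph{(2)} for $E_1$-degeneration) do. Your numerical checks (e.g.\ $h^{1,0}+h^{0,1}=6>2=b_1$ at $t=0$ versus equality for $t\neq0$, and the agreement of Bott-Chern, Dolbeault, and de Rham dimensions in case \emph{(1)}) are exactly the ``straightforward'' verifications the paper leaves implicit.
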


The non-closedness of the property of $E_{1}$-degeneration of the Hodge and Fr\"olicher spectral sequences was firstly proven by M.~G. Eastwood and M.~A. Singer in \cite[Theorem 5.4]{eastwood-singer}, by considering twistor spaces.

\begin{remark}{\rm
Note that the small deformations $(\left. \Gamma \middle\backslash G \right.,\, J_{t})$ as in case {\itshape (1)} of the holomorphically parallelizable Nakamura manifold $\left. \Gamma \middle\backslash G \right.$ provide examples of compact complex manifolds that are not in {\em Fujiki class $\mathcal C$}, \cite{fujiki}, but satisfy the $\partial\bar\partial$-Lemma. This follows from \cite[Theorem 2.3]{chiose}. See also \cite[Theorem 9]{arapura}, or \cite[Theorem 3.3]{angella-kasuya-4}.

This is in accord with the conjectures that the property of being Mo\v\i\v shezon is closed under holomorphic deformations, see \cite{popovici-moishezon}, and that the Fujiki class $\mathcal C$ is closed under holomorphic deformations, \cite[Standard Conjecture 1.17]{popovici-annsns}, (compare also \cite[Question 1.5]{popovici-inventiones}).
}\end{remark}

\bibliographystyle{amsalpha}


\begin{thebibliography}{DEM85}

\bibitem[Aep65]{aeppli}
A. Aeppli, On the cohomology structure of Stein manifolds, {\em Proc. Conf. Complex Analysis (Minneapolis, Minn., 1964)}, Springer, Berlin, 1965, pp. 58--70.

\bibitem[AB90]{alessandrini-bassanelli}
L. Alessandrini, G. Bassanelli, Small deformations of a class of compact non-K\"ahler manifolds, {\em Proc. Amer. Math. Soc.} \textbf{109} (1990), no.~4, 1059--1062.

\bibitem[Ang13]{angella-1}
D. Angella, The cohomologies of the Iwasawa manifold and of its small deformations, {\em J. Geom. Anal.} \textbf{23} (2013), no.~3, 1355--1378.

\bibitem[AK12]{angella-kasuya-1}
D. Angella, H. Kasuya, Bott-Chern cohomology of solvmanifolds, to appear in {\em Ann. Global Anal. Geom.}, DOI:10.1007/s10455-017-9560-6.

\bibitem[AK14]{angella-kasuya-4}
D. Angella, H. Kasuya, Hodge theory for twisted differentials, {\em Complex Manifolds} \textbf{1} (2014), no. 1, 64--85.

\bibitem[AR12]{angella-rossi}
D. Angella, F.~A. Rossi, Cohomology of $\mathbf{D}$-complex manifolds, {\em Differ. Geom. Appl.} \textbf{30} (2012), no.~5, 530--547.

\bibitem[AT13]{angella-tomassini-3}
D. Angella, A. Tomassini, On the $\partial\overline\partial$-Lemma and Bott-Chern cohomology, {\em Invent. Math.} \textbf{192} (2013), no.~1, 71--81.

\bibitem[Ara04]{arapura}
D. Arapura, K\"ahler solvmanifolds, {\em Int. Math. Res. Not.} \textbf{2004} (2004), no.~3, 131--137.

\bibitem[BG90]{benson-gordon}
Ch. Benson, C.~S. Gordon, K\"ahler structures on compact solvmanifolds, {\em Proc. Amer. Math. Soc.} \textbf{108} (1990), 971--980.

\bibitem[BC65]{bott-chern}
R. Bott, S.~S. Chern, Hermitian vector bundles and the equidistribution of the zeroes of their holomorphic sections, {\em Acta Math.} \textbf{114} (1965), no.~1, 71--112.

\bibitem[Chi]{chiose}
I. Chiose, Obstructions to the existence of K\"ahler structures on compact complex manifolds, {\em Proc. Amer. Math. Soc.} \textbf{142} (2014), no.~10, 3561--3568.

\bibitem[CF01]{console-fino}
S. Console, A. Fino, Dolbeault cohomology of compact nilmanifolds, {\em Transform. Groups} \textbf{6} (2001), no.~2, 111--124.

\bibitem[CF11]{console-fino-sns}
S. Console, A. Fino, On the de Rham cohomology of solvmanifolds, {\em Ann. Sc. Norm. Super. Pisa Cl. Sci. (5)} \textbf{10} (2011), no.~4, 801--818.

\bibitem[CFK13]{console-fino-kasuya}
S. Console, A. Fino, H. Kasuya, Modification and cohomology of solvmanifolds, {\em Transform. Groups} \textbf{21} (2016), no. 3, 653--680.

\bibitem[CFGU00]{cordero-fernandez-gray-ugarte}
L.~A. Cordero, M. Fern\'{a}ndez, A. Gray, L. Ugarte, Compact nilmanifolds with nilpotent complex structures: Dolbeault cohomology, {\em Trans. Amer. Math. Soc.} {\bfseries 352} (2000), no.~12, 5405--5433.

\bibitem[Dek00]{dek}
K. Dekimpe, Semi-simple splittings for solvable Lie groups and polynomial structures, {\em Forum Math.} {\bf 12} (2000), no.~1, 77--96.

\bibitem[DGMS75]{deligne-griffiths-morgan-sullivan}
P. Deligne, Ph. Griffiths, J. Morgan, D.~P. Sullivan, Real homotopy theory of K\"ahler manifolds, {\em Invent. Math.} \textbf{29} (1975), no.~3, 245--274.

\bibitem[Dem12]{demailly-agbook}
J.-P. Demailly, \emph{{C}omplex {A}nalytic and {D}ifferential {G}eometry}, \url{http://www-fourier.ujf-grenoble.fr/~demailly/manuscripts/agbook.pdf}, 2012.

\bibitem[ES93]{eastwood-singer}
M.~G. Eastwood, M.~A. Singer, The Fr\"ohlicher spectral sequence on a twistor space, {\em J. Differ. Geom.} \textbf{38} (1993), no.~3, 653--669.

\bibitem[FT09]{fino-tomassini-advmath}
A. Fino, A. Tomassini, Blow-ups and resolutions of strong K\"ahler with torsion metrics, {\em Adv. Math.} \textbf{221} (2009), no.~3, 914--935.

\bibitem[Fuj78]{fujiki}
A. Fujiki, On automorphism groups of compact K\"ahler manifolds, {\em Invent. Math.} \textbf{44} (1978), no.~3, 225--258.

\bibitem[Gua07]{guan}
D. Guan, Modification and the cohomology groups of compact solvmanifolds, {\em Electron. Res. Announc. Amer. Math. Soc.} \textbf{13} (2007), 74--81.

\bibitem[Has10]{Hd}
K. Hasegawa, Small deformations and non-left-invariant complex structures on six-dimensional compact solvmanifolds, {\em Differ. Geom. Appl.} {\bf 28} (2010), no.~2, 220--227.

\bibitem[Hat60]{hattori}
A. Hattori, Spectral sequence in the de {R}ham cohomology of fibre bundles, {\em J. Fac. Sci. Univ. Tokyo Sect. I} \textbf{8} (1960), no.~1960, 289--331.

\bibitem[Kas13a]{kasuya-jdg}
H. Kasuya, Minimal models, formality and hard Lefschetz properties of solvmanifolds with local systems, {\em J. Differ. Geom.} \textbf{93}, (2013), 269--298.

\bibitem[Kas13b]{kasuya-mathz}
H. Kasuya, Techniques of computations of Dolbeault cohomology of solvmanifolds, {\em Math. Z.} \textbf{273} (2013), no.~1--2, 437--447.

\bibitem[Kas14a]{kasuya-hodge}
H. Kasuya, Hodge symmetry and decomposition on non-K\"ahler solvmanifolds, {\em J. Geom. Phys.} \textbf{76} (2014), 61--65.

\bibitem[Kas14b]{kasuya-holpar}
H. Kasuya, De Rham and Dolbeault cohomology of solvmanifolds with local systems,  {\em Math. Res. Lett.} {\bf 21} (2014), no. 4, 781--805.

\bibitem[Kod05]{kodaira}
K. Kodaira, {\em Complex manifolds and deformation of complex structures}, Translated from the 1981 Japanese original by Kazuo Akao, Reprint of the 1986 English edition, Classics in Mathematics, Springer-Verlag, Berlin, 2005.

\bibitem[KS60]{kodaira-spencer-3}
K. Kodaira, D.~C. Spencer, On deformations of complex analytic structures. III. Stability theorems for complex structures, {\em Annals of Math. (2)} \textbf{71} (1960), no.~1, 43--76.

\bibitem[Mic82]{michelsohn}
M.~L. Michelsohn, On the existence of special metrics in complex geometry, {\em Acta Math.} \textbf{149} (1982), no.~3-4, 261--295.

\bibitem[Moi67]{moishezon}
B.~G. Mo\v{\i}\v{s}ezon, On $n$-dimensional compact complex manifolds having $n$ algebraically independent meromorphic functions. I, II, III, {\em Izv. Akad. Nauk SSSR Ser. Mat.} \textbf{30} (1966), no.~1, 2, 3, 133--174, 345--386, 621--656, Translation in {\em Am. Math. Soc., Transl., II. Ser.} \textbf{63} (1967), 51--177.

\bibitem[Mos57]{mostow}
G.~D. Mostow, Factor spaces of solvable groups, {\em Ann. of Math. (2)} \textbf{60} (1954), no.~1, 1--27. Errata, ``Factor spaces of solvable groups.'', {\em Ann. of Math. (2)} \textbf{66} (1957), no.~3, 590.

\bibitem[Nak75]{nakamura}
I. Nakamura, Complex parallelisable manifolds and their small deformations, {\em J. Differ. Geom.} \textbf{10} (1975), no.~1, 85--112.

\bibitem[Nom54]{nomizu}
K. Nomizu, On the cohomology of compact homogeneous spaces of nilpotent Lie groups, {\em Annals of Math. (2)} \textbf{59} (1954), no.~3, 531--538.

\bibitem[Pop10]{popovici-moishezon}
D. Popovici, Limits of Moishezon Manifolds under Holomorphic Deformations, \texttt{arXiv:1003.3605}.

\bibitem[Pop13]{popovici-inventiones}
D. Popovici, Deformation limits of projective manifolds: Hodge numbers and strongly Gauduchon metrics, {\em Invent. Math.} \textbf{194} (2013), no.~3, 515--534..

\bibitem[Pop14]{popovici-annsns}
D. Popovici, Deformation openness and closedness of various classes of compact complex manifolds; examples, {\em Ann. Sc. Norm. Super. Pisa Cl. Sci. (5)} \textbf{13} (2014), no.~2, 255--305.

\bibitem[Rol09]{rollenske}
S. Rollenske, Geometry of nilmanifolds with left-invariant complex structure and deformations in the large, {\em Proc. London Math. Soc.} \textbf{99} (2009), no.~2, 425--460.

\bibitem[Rol11a]{rollenske-jems}
S. Rollenske, The Kuranishi space of complex parallelisable nilmanifolds, {\em J. Eur. Math. Soc. (JEMS)} \textbf{13} (2011), no.~3, 513--531.

\bibitem[Rol11b]{rollenske-survey}
S. Rollenske, Dolbeault cohomology of nilmanifolds with left-invariant complex structure, in W. Ebeling, K. Hulek, K. Smoczyk (eds.), {\em Complex and Differential Geometry: Conference held at Leibniz Universit\"at Hannover, September 14 -- 18, 2009}, Springer Proceedings in Mathematics \textbf{8}, Springer, 2011, 369--392.

\bibitem[Sak76]{sakane}
Y. Sakane, On compact complex parallelisable solvmanifolds, {\em Osaka J. Math.} \textbf{13} (1976), no.~1, 187--212.

\bibitem[SY05]{sawai-yamada}
H. Sawai, T. Yamada,  Lattices on Benson-Gordon type solvable Lie groups. Topology Appl. {\bf 149} (2005), no. 1-3, 85--95.

\bibitem[Sch07]{schweitzer}
M. Schweitzer, Autour de la cohomologie de Bott-Chern, Pr\'epublication de l'Institut Fourier no.~703 (2007), \texttt{arXiv:0709.3528}.

\bibitem[Tom08]{tomasiello}
A. Tomasiello, Reformulating supersymmetry with a generalized Dolbeault operator, {\em J. High Energy Phys.} \textbf{2008}, no.~2, 010, 25 pp.

\bibitem[Voi02]{voisin}
C. Voisin, {\em Th\'{e}orie de Hodge et g\'{e}om\'{e}trie alg\'{e}brique complexe}, Cours Sp\'{e}cialis\'{e}s, \textbf{10}, Soci\'{e}t\'{e} Math\'{e}matique de France, Paris, 2002.

\bibitem[Wan54]{wang}
H.-C. Wang, Complex parallisable manifolds, {\em Proc. Amer. Math. Soc.} \textbf{5} (1954), no.~5, 771--776.

\bibitem[Wu06]{wu}
C.-C. Wu, On the geometry of superstrings with torsion, Thesis (Ph.D.)Harvard University, Proquest LLC, Ann Arbor, MI, 2006.

\end{thebibliography}

\end{document}